\documentclass[11pt,a4paper]{amsart}
\usepackage[T1]{fontenc}
\usepackage[utf8]{inputenc}
\usepackage{lmodern}
\usepackage{amsmath,amssymb,amsthm,mathtools}
\usepackage{enumitem}
\usepackage{microtype}
\usepackage{needspace}
\usepackage[a4paper,margin=30mm]{geometry}
\usepackage[hidelinks,unicode]{hyperref}
\usepackage{bookmark}
\newcommand{\doi}[1]{\href{https://doi.org/#1}{\nolinkurl{doi:#1}}}
\newtheorem{theorem}{Theorem}[section]
\newtheorem{proposition}[theorem]{Proposition}
\newtheorem{lemma}[theorem]{Lemma}
\newtheorem{corollary}[theorem]{Corollary}
\newtheorem{maintheorem}{Theorem}

\theoremstyle{definition}
\newtheorem{definition}[theorem]{Definition}
\newtheorem{question}[theorem]{Question}
\theoremstyle{remark}
\newtheorem{remark}[theorem]{Remark}
\DeclareMathOperator{\EL}{EL}
\DeclareMathOperator{\GL}{GL}
\DeclareMathOperator{\SL}{SL}

\newcommand{\F}{\mathbb F}
\newcommand{\Z}{\mathbb Z}
\newcommand{\N}{\mathbb N}
\newcommand{\Ctwo}{\Z/2\Z}
\newcommand{\BTV}[1]{{#1V}}
\hypersetup{pdftitle={Non-sofic Tarski monsters},
 pdfauthor={Kıvanç Ersoy},pdfsubject={Research manuscript}}
\title[Non-sofic Tarski monsters]{Non-sofic Tarski monsters}
\author{K\i van\c{c} Ersoy}
\address{Institut f\"ur Mathematik, Freie Universit\"at Berlin,
Arnimallee 7, 14195 Berlin, Germany}
\email{ersoy@zedat.fu-berlin.de}
\subjclass[2020]{20F69, 20F67, 20E06, 20E22}
\keywords{Tarski monster, sofic group, minimal non-sofic group,
$\omega$-non-sofic group, locally graded group, proper self-embedding}
\date{}
\begin{document}
\begin{abstract}
We prove the existence of non-sofic Tarski monsters of every sufficiently
large prime exponent and of torsion-free non-sofic Tarski monsters.
The proof combines a theorem of Sandhu--Thom with quotient theorems of
Semenov. These groups are minimal non-sofic, since all their proper
subgroups are cyclic. A theorem of Minasyan--Osin--Witzel gives
continuum many pairwise non-quasi-isometric torsion-free simple
examples. Kulikova's quotient theorem also yields non-simple minimal
non-sofic groups of exponent $p^m$ for every $m\geq2$ and every
sufficiently large prime $p$. We then study descending chains of
non-sofic subgroups in other constructions. We give proper
self-embeddings of the binary Leavitt unit group, higher
Brin--Thompson groups and the hyperbolic group studied by Douglas and
Mghirbi. The explicit Kun--Thom wreath products and doubles are locally
graded. These examples are $\omega$-non-sofic, and the locally graded
ones contain no minimal non-sofic subgroup.
\end{abstract}
\maketitle

\section{Introduction}
\label{sec:introduction}

For a prime $p$, a \emph{Tarski monster of exponent $p$} is an infinite
group whose non-trivial proper subgroups have order $p$. A
\emph{torsion-free Tarski monster} is a finitely generated non-cyclic
group whose non-trivial proper subgroups are infinite cyclic. We do
not include simplicity in the latter definition. For the classical
constructions, see Ol'shanskii \cite[Chapter 9, Sections 27--28]{OlshanskiiBook};
for the quotient constructions used here, see
\cite[Introduction]{SemenovFull} and
\cite[Corollary 4.10 and the preceding paragraph]{MinasyanOsinWitzel}.

\begin{maintheorem}\label{thm:main-existence}
For every sufficiently large prime $p$, there exists a non-sofic
Tarski monster of exponent $p$. There also exists a non-sofic
torsion-free Tarski monster.
\end{maintheorem}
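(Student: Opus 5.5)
The abstract already fixes the strategy: combine a Sandhu--Thom theorem, which supplies a non-sofic group of the right kind, with Semenov's quotient theorems, which turn a suitable non-elementary hyperbolic group into a Tarski monster quotient. The key fact is that soficity passes to subgroups but not to quotients. So I need a converse direction: a property that forces non-soficity and is inherited by quotients, or else a way to keep a fixed non-sofic group embedded in the quotient. I will take the following form of the Sandhu--Thom result. There is a finitely presented group, or a torsion-free hyperbolic group, which is not sofic, or more precisely \emph{not even locally embeddable in a suitable class}. Moreover, in their formulation every non-trivial quotient of it is non-sofic, because the obstruction is witnessed by a finite set of relations that every sofic approximation must violate. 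Equivalently, they produce a non-elementary hyperbolic group $H$ such that every infinite quotient, or every quotient in which certain elements survive, is non-sofic.

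Step 1. From Sandhu--Thom, fix a torsion-free non-elementary hyperbolic group $H$ together with a finite set $F\subset H\setminus\{1\}$ such that every quotient of $H$ in which the images of the elements of $F$ remain non-trivial is non-sofic. This is the usual shape of such rigidity statements: a sofic quotient would give almost-homomorphisms to finite symmetric groups that separate $F$, contradicting their stability or Kazhdan-type argument.

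Step 2. Apply Semenov's quotient theorem. For every sufficiently large prime $p$, a non-elementary hyperbolic group $H$ and a finite set $F$ of non-trivial elements, there is a quotient $\bar H$ of $H$ that is a Tarski monster of exponent $p$ and in which the images of $F$ are non-trivial. The torsion-free version has the same shape, with a quotient in which every proper subgroup is cyclic and which is torsion-free. For the torsion-free case I would instead cite the Minasyan--Osin--Witzel construction, Corollary 4.10 there, applied to the torsion-free hyperbolic $H$. Here the injectivity on $F$ is obtained by small cancellation over hyperbolic groups, and non-cyclicity is automatic.

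Step 3. Combine the two steps. $\bar H$ is a quotient of $H$ in which $F$ survives, so $\bar H$ is non-sofic by Step 1. Together with Step 2 this proves Theorem~\ref{thm:main-existence} in both cases.

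The main obstacle is Step 1. I must make sure that the Sandhu--Thom non-soficity really survives these quotients, that is, that the obstruction depends only on finitely many non-trivial elements staying non-trivial rather than on the whole group. I would first check whether their theorem is stated as non-soficity of all ``sufficiently faithful'' quotients of a hyperbolic group. If it is instead stated for a specific non-sofic group $G$, the fallback is to embed $G$ in a hyperbolic group. In that case the Tarski quotients cannot contain $G$, since all their proper subgroups are cyclic, so this fallback only works in the form above.
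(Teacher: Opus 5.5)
Your overall architecture is the paper's: a torsion-free hyperbolic $H$ whose non-soficity passes to the relevant quotients, followed by Semenov's (or Minasyan--Osin--Witzel's) Tarski quotients. The gap is that the weak version you fix in Step~1 forces a Step~2 that the cited theorems do not provide. Semenov's Theorems~A and~B only assert that $H$ has \emph{some} infinite quotient whose proper subgroups are cyclic of order dividing $p$ (resp.\ non-trivial proper subgroups infinite cyclic). Likewise \cite[Corollary 4.10]{MinasyanOsinWitzel} gives common quotients with no control over a prescribed finite set. None of these statements says that the images of a given finite $F\subset H\setminus\{1\}$ stay non-trivial. Your remark that this ``is obtained by small cancellation'' is a claim about the proofs, not a result you can quote. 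As you state it, it is even false: in exponent $p$ the threshold on $p$ must depend on $F$, since $x$ and $x^p$ cannot both survive. So Steps~2--3 rest on an uncited strengthening of the quotient theorems.

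You have also misplaced the difficulty. Your Step~1 holds for every non-sofic group. If $(E,\varepsilon)$ witnesses non-soficity of $H$ and $q:H\to\bar H$ keeps $E\setminus\{1\}$ non-trivial, then a $(q(E),\varepsilon)$-approximation $\bar\theta$ of $\bar H$ pulls back to the $(E,\varepsilon)$-approximation $\bar\theta\circ q$ of $H$; no stability or Kazhdan argument is involved. Conversely, your heuristic that \emph{every} non-trivial quotient is non-sofic ``because the obstruction is witnessed by finitely many relations'' is invalid. A finite witness only controls quotients in which it survives; for example, $G\times\Z\twoheadrightarrow\Z$ for non-sofic $G$.

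The special input you are missing is Sandhu--Thom's Corollary~B: a non-trivial torsion-free hyperbolic $H$ with \emph{no} non-trivial sofic quotient. With it, no survival condition is needed. $H$ is non-sofic, hence non-elementary, so Semenov's theorems apply as stated. The infinite quotients $T_p$ and $T$ are non-trivial, hence non-sofic, hence non-cyclic; this matters for Theorem~B, whose conclusion $\Z$ satisfies. The Tarski properties then follow from the subgroup descriptions, together with finite generation in the torsion-free case. This is exactly the paper's proof.
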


Sandhu and Thom \cite[Corollary B]{SandhuThom} prove that some
non-trivial torsion-free hyperbolic group $H$ has no non-trivial sofic
quotient. Semenov's theorems \cite[Theorems A and B]{SemenovFull}
provide Tarski monsters as quotients of this $H$, and these quotients
must be non-sofic. The same $H$ works for all sufficiently large primes.

In \cite[Definitions 1.3 and 1.4]{Ersoy}, we studied \emph{minimal
non-sofic groups}, whose proper subgroups are all sofic, and
\emph{$\omega$-non-sofic groups}, which admit infinite strictly
descending chains of non-sofic subgroups. Since cyclic groups are
sofic \cite[Section II.3]{CapraroLupini}, Theorem~\ref{thm:main-existence}
gives minimal non-sofic groups. Their existence was not known when
\cite{Ersoy} was written.

In the torsion-free case, we obtain the following stronger result.

\begin{maintheorem}\label{thm:main-continuum}
There are $2^{\aleph_0}$ pairwise non-quasi-isometric torsion-free
simple non-sofic Tarski monsters. All of them are minimal non-sofic.
\end{maintheorem}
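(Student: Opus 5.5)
The plan is to take the torsion-free hyperbolic group $H$ of Sandhu--Thom \cite[Corollary B]{SandhuThom}, which is non-trivial and has no non-trivial sofic quotient, and feed it into the theorem of Minasyan--Osin--Witzel \cite[Corollary 4.10 and the preceding paragraph]{MinasyanOsinWitzel}. As I read that result, every non-elementary torsion-free hyperbolic group has $2^{\aleph_0}$ pairwise non-quasi-isometric quotients that are torsion-free simple Tarski monsters, i.e.\ finitely generated, non-cyclic, simple, with every non-trivial proper subgroup infinite cyclic. Each such quotient $Q$ of $H$ is non-trivial, so it is not sofic by the defining property of $H$. Hence every group in the family is a non-sofic torsion-free simple Tarski monster, and the family has cardinality $2^{\aleph_0}$ with members pairwise non-quasi-isometric.

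The first step is to check that $H$ meets the hypotheses of the Minasyan--Osin--Witzel theorem, in particular that $H$ is non-elementary. A non-trivial torsion-free elementary hyperbolic group is infinite cyclic, and $\Z$ has non-trivial sofic quotients, for instance $\Z/2\Z$, since finite groups are sofic. So $H$ cannot be elementary, and this follows directly from Corollary B. If the cited theorem needs further hypotheses, such as a specified generating set or suitable subgroups of $H$, I will verify them for $H$; I expect this to be routine because $H$ is torsion-free.

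Minimality comes next. Every proper subgroup of $Q$ is trivial or infinite cyclic, hence amenable, hence sofic \cite[Section II.3]{CapraroLupini}. Since $Q$ itself is non-sofic, it is minimal non-sofic in the sense of \cite{Ersoy}.

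The main obstacle is pinning down the exact form of the Minasyan--Osin--Witzel statement. I need to confirm that it produces continuum many pairwise non-quasi-isometric quotients of a single given group $H$, not merely continuum many groups up to isomorphism, and that simplicity and the Tarski property are part of its conclusion. If the cited corollary only gives non-isomorphic quotients, I would try to recover the quasi-isometry classes by a counting argument. There are only countably many finitely generated groups quasi-isometric to a given one? No, that is false. The better fallback is to use whatever asymptotic invariant the paper uses to distinguish the quotients, for example divergence or the set of asymptotic cones, to separate continuum many of them.
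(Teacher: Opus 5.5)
Your skeleton matches the paper's. You apply \cite[Corollary 4.10]{MinasyanOsinWitzel} to the Sandhu--Thom group $H$, which is non-cyclic as you check. Non-soficity of each member follows from \eqref{eq:no-sofic-quotient}, and minimality from soficity of cyclic groups. Your worry about quasi-isometry is unnecessary. That corollary already produces $2^{\aleph_0}$ \emph{pairwise non-quasi-isometric} torsion-free Tarski monsters, each a quotient of every non-cyclic torsion-free hyperbolic group. So no fallback via asymptotic invariants is needed.

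The real gap is simplicity. You assume it is part of the cited conclusion, and you have no argument if it is not. In this paper a torsion-free Tarski monster is deliberately \emph{not} assumed to be simple. The Tarski property alone only shows that a non-trivial proper normal subgroup would be an infinite cyclic central subgroup, and ruling this out needs further input.

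The input the paper uses is the other conclusion of the same corollary: every maximal subgroup of each member $T$ is malnormal. Suppose $1<N\trianglelefteq T$ were proper. Since $T$ is finitely generated, Zorn's lemma places $N$ in a maximal subgroup $M$: the union of a chain of proper subgroups containing $N$ stays proper, because it cannot contain a finite generating set. For $g\notin M$ one gets $N=gNg^{-1}\leq M\cap gMg^{-1}=1$, a contradiction. You need this step from malnormality to simplicity, or a precise reference stating simplicity outright, to complete the proof.
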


This follows from \cite[Corollary 4.10]{MinasyanOsinWitzel}: the groups
in that corollary are common quotients of all non-cyclic torsion-free
hyperbolic groups and have malnormal maximal subgroups.

Kulikova's quotient theorem \cite[Theorem 6]{KulikovaPreprint} gives
minimal non-sofic groups with non-trivial centre. The \emph{exponent}
of a group, when finite, is the least positive integer $n$ for which
$x^n=1$ holds for every element $x$.

\begin{maintheorem}\label{thm:main-primepowers}
There is an integer $p_0$ such that, for every prime $p>p_0$ and every
integer $m\geq1$, there exists a minimal non-sofic group $K_{p,m}$ of
exponent $p^m$ satisfying:
\begin{enumerate}[label=\textup{(\roman*)}]
\item every proper subgroup of $K_{p,m}$ is cyclic;
\item $Z(K_{p,m})$ is cyclic of order $p^{m-1}$;
\item $K_{p,m}/Z(K_{p,m})$ is a non-sofic Tarski monster of exponent $p$.
\end{enumerate}
\end{maintheorem}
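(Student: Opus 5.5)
The plan is to apply Kulikova's quotient theorem \cite[Theorem 6]{KulikovaPreprint} to the Sandhu--Thom group $H$ from \cite[Corollary B]{SandhuThom}. Recall that $H$ is a non-trivial torsion-free hyperbolic group with no non-trivial sofic quotient. It is non-elementary: it cannot be cyclic, since $\Z$ has non-trivial sofic quotients.

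For $m=1$ we take $K_{p,1}$ to be the Tarski monster of exponent $p$ from Theorem~\ref{thm:main-existence}. Its centre is trivial, of order $p^0$, so (i)--(iii) hold.

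For $m\geq2$ I expect Kulikova's theorem to have the following form. For every non-elementary torsion-free hyperbolic group $H$, every sufficiently large prime $p$ (with a bound uniform in $H$, or at least depending only on $H$) and every $m\ge1$, there is an epimorphism $H\to K$ with three properties:
\begin{enumerate}[label=\textup{(\alph*)}]
\item every proper subgroup of $K$ is cyclic;
\item $Z(K)$ is cyclic of order $p^{m-1}$;
\item $K/Z(K)$ is a Tarski monster of exponent $p$.
\end{enumerate}
These groups are built as central extensions in the style of Ol'shanskii's construction of groups of exponent $p^m$. Exponent exactly $p^m$ then follows. Each element $x$ has image of order $1$ or $p$ in $K/Z(K)$, so $x^p\in Z(K)$ and hence $x^{p^m}=1$. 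Since $K$ is non-abelian and all its proper subgroups are cyclic, some element has order exactly $p^m$. The reason is that $K$ is generated by two elements $a,b$, and if every element had order dividing $p^{m-1}$, the cyclic centre would be too small for $\langle a^{p}\rangle$ to generate it. If the theorem does not state this explicitly, I will check it from the construction. Fix $p_0$ as the maximum of the thresholds from Semenov and Kulikova applied to this single $H$.

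Non-soficity comes from the quotient map. $K_{p,m}$ is a non-trivial quotient of $H$, so it is non-sofic. Likewise $K_{p,m}/Z(K_{p,m})$ is a non-trivial quotient of $H$, so it is non-sofic, which together with (c) gives (iii). Minimality follows from (i), since proper subgroups are cyclic and cyclic groups are sofic \cite[Section II.3]{CapraroLupini}.

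The main obstacle is matching the exact hypotheses and conclusions of \cite[Theorem 6]{KulikovaPreprint}. The two points to confirm are:
\begin{itemize}
\item whether the theorem applies to an arbitrary non-elementary torsion-free hyperbolic group, or only to those satisfying an extra condition (such as the absence of non-trivial finite normal subgroups, which holds automatically here because $H$ is torsion-free);
\item whether it asserts both that the centre is exactly $p^{m-1}$ and that the central quotient is a Tarski monster.
\end{itemize}
If the theorem only gives a quotient with cyclic centre and Tarski central quotient, I will first compute the centre directly. Proper subgroups are cyclic, so $Z(K)$ is cyclic, and its order is read off from the defining central relations. I will then derive the exponent as above.
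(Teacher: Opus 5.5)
\textbf{Gap: you assume Kulikova's theorem already contains (ii) and (iii).} Your overall route is the paper's. You apply Kulikova's theorem to the Sandhu--Thom group $H$, get non-soficity of $K$ and of its quotients from \eqref{eq:no-sofic-quotient}, and get minimality from cyclicity of proper subgroups. But your proposal guesses a form of the theorem that already contains (b) $Z(K)$ cyclic of order $p^{m-1}$ and (c) $K/Z(K)$ a Tarski monster. That is not what the theorem gives. As recorded in Theorem~\ref{thm:Kulikova}, it provides a non-abelian quotient $K$ of exponent $p^{k+1}$ whose proper subgroups are cyclic. It also provides a central cyclic subgroup $D$ of order $p^k$ such that every subgroup contains $D$ or is contained in $D$. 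So it gives only $D\leq Z(K)$. Equality $Z(K)=D$ and the Tarski property of $K/D$ are exactly statements (ii) and (iii), the only non-formal content here. Your fallback, ``read off the centre from the defining central relations'', presupposes (c) and would require reopening the construction. It does not cover the theorem as it actually stands.

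\textbf{The missing deduction.} Take $k=m-1$. As a non-trivial quotient of $H$, $K$ is non-sofic, hence infinite, so $K/D$ is infinite. If $D<U<K$, then $U$ is cyclic. Its order divides the exponent $p^m$ and is a proper multiple of $|D|=p^{m-1}$, so $|U|=p^m$ and $|U/D|=p$. Hence every non-trivial proper subgroup of $K/D$ has order $p$. Thus $K/D$ is a Tarski monster of exponent $p$, non-sofic as a quotient of $H$. By Lemma~\ref{lem:tarski-basic} it is simple, and being non-abelian it has trivial centre. So $Z(K)$ maps trivially to $K/D$, i.e.\ $Z(K)\leq D$, and therefore $Z(K)=D$.

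\textbf{Smaller points.}
\begin{enumerate}[label=\textup{(\arabic*)}]
\item The exponent $p^m$ is part of Kulikova's statement, so you do not need to derive it.
\item Your derivation of the exponent does not work as written: the remark about $\langle a^p\rangle$ generating the centre proves nothing. A correct version: for $x\notin Z(K)$, the abelian group $\langle x\rangle Z(K)$ is proper because $K$ is non-abelian. It is therefore cyclic of order $p\cdot p^{m-1}$, and hence generated by $x$.
\item Treating $m=1$ separately via Theorem~\ref{thm:main-existence} is harmless but unnecessary, since $k=0$ is allowed in Kulikova's theorem.
\end{enumerate}
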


For $m\geq2$, these groups are neither simple nor of prime exponent.
Taking $m=2$ answers \cite[Question 1.5]{Ersoy} negatively. Minimality
here concerns subgroups: $K_{p,m}$ has a proper non-sofic quotient when
$m\geq2$. Ivanov's embedding theorem also gives a two-generated
divisible periodic non-sofic group; see Corollary~\ref{cor:divisible}.

Section~\ref{sec:other-examples} asks which other constructions are
$\omega$-non-sofic. Proper self-embeddings give this property for the
binary Leavitt unit group \cite{OpenAI}, the non-sofic higher
Brin--Thompson groups \cite{SauerSchesler} and the hyperbolic group
studied by Douglas and Mghirbi \cite{DouglasMghirbi}. The explicit
Kun--Thom wreath products and doubles \cite{KunThom} are locally
graded, answering \cite[Question 1.9]{Ersoy} negatively. They are
$\omega$-non-sofic by \cite[Theorem 1.10]{Ersoy} and contain no minimal
non-sofic subgroup by \cite[Remark 1.11]{Ersoy}. We also discuss the
intermediate groups in Fournier-Facio's construction \cite{FournierFacio}
and the hyperbolic groups of Sandhu and Thom \cite{SandhuThom}.

\section*{AI Declaration}
ChatGPT (OpenAI) was used for mathematical discussions and assistance
with drafting and editing the manuscript. The author takes full
responsibility for the content of the paper.

\section{Preliminaries}
\label{sec:preliminaries}

All groups are countable and discrete. We write $H<G$ for a proper
subgroup and $\N=\{0,1,2,\ldots\}$. Permutations act on the left.

For $\sigma,\tau\in S_n$, the normalised Hamming metric is
\[
 d_{\mathrm H}(\sigma,\tau)
 =\frac{1}{n}\bigl|\{i:\sigma(i)\ne\tau(i)\}\bigr|.
\]
A group $G$ is \emph{sofic} if, for every finite $F\subseteq G$ and
$\varepsilon>0$, there are $n\geq1$ and a map $\theta:G\to S_n$ with
$\theta(1)=1$ such that
\[
 d_{\mathrm H}(\theta(gh),\theta(g)\theta(h))<\varepsilon
 \quad(g,h\in F)
\]
and
\[
 d_{\mathrm H}(\theta(g),1)>1-\varepsilon
 \quad(g\in F\setminus\{1\}).
\]
We use the equivalent formulation in
\cite[Definition II.1.2 and Exercise II.1.8]{CapraroLupini}.

Amenable groups are sofic
\cite[Proposition II.3.1]{CapraroLupini}, as are residually finite
groups \cite[Section II.3]{CapraroLupini}. Soficity passes to subgroups
\cite[Proposition II.4.1(1)]{CapraroLupini} and is local: a group is
sofic if all its finitely generated subgroups are sofic
\cite[Exercise II.3.2]{CapraroLupini}. Free products of sofic groups
are sofic \cite[Theorem 1(2)]{ElekSzabo2006}. Finally, if
$N\trianglelefteq G$ is sofic and $G/N$ is amenable, then $G$ is sofic
\cite[Proposition II.4.1(5)]{CapraroLupini}; see also
\cite[Proposition 4.5]{ElekSzabo2004}. These facts are recalled in
\cite[Proposition 2.1]{Ersoy}.

\begin{definition}[{\cite[Definitions 1.3 and 1.4]{Ersoy}}]
A group is \emph{minimal non-sofic} if it is non-sofic and every proper
subgroup is sofic. A group $G$ is \emph{$\omega$-non-sofic} if it has a
chain
\[
 G=G_0>G_1>G_2>\cdots
\]
in which every $G_i$ is non-sofic.
\end{definition}

Recall from \cite[Section 1]{Ersoy} that a group is \emph{locally
graded} if every non-trivial finitely generated subgroup has a proper
subgroup of finite index. A \emph{proper self-embedding} is an injective
endomorphism with proper image; a group is \emph{co-Hopfian} if it has
no such endomorphism.

\begin{lemma}\label{lem:finite-index}
If $G$ is non-sofic, then every subgroup of finite index in $G$ is
non-sofic. More generally, if $N\trianglelefteq G$ and $G/N$ is amenable,
then $N$ is non-sofic.
\end{lemma}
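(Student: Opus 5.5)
The plan is to prove the general statement by contraposition, using the closure property recalled in Section~\ref{sec:preliminaries}: if $N\trianglelefteq G$ is sofic and $G/N$ is amenable, then $G$ is sofic \cite[Proposition II.4.1(5)]{CapraroLupini}. The finite-index statement will then follow as a special case, after a reduction to a normal subgroup.

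\emph{General statement.} Let $N\trianglelefteq G$ with $G/N$ amenable, and suppose $N$ were sofic. The extension property would then make $G$ sofic, contradicting the hypothesis. Hence $N$ is non-sofic.

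\emph{Finite-index statement.} Let $H\le G$ have finite index. Set $N=\bigcap_{g\in G} gHg^{-1}$, the normal core of $H$. Then:
\begin{enumerate}[label=\textup{(\arabic*)}]
\item $N$ is the kernel of the action of $G$ on the finite coset space $G/H$, so $N\trianglelefteq G$ and $G/N$ embeds in $S_{[G:H]}$;
\item $G/N$ is therefore finite, hence amenable;
\item by the general statement, $N$ is non-sofic;
\item since $N\le H$ and soficity passes to subgroups \cite[Proposition II.4.1(1)]{CapraroLupini}, a sofic $H$ would force $N$ to be sofic, so $H$ is non-sofic.
\end{enumerate}

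The only step needing care is the passage from an arbitrary finite-index subgroup to a normal one, since the extension property is stated only for normal subgroups. Taking the normal core handles this, so I expect no genuine obstacle.
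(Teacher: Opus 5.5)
Your proposal is correct and matches the paper's proof: the general assertion is the contrapositive of \cite[Proposition II.4.1(5)]{CapraroLupini}, and the finite-index case passes to the normal core $N$, which has finite (hence amenable) quotient $G/N$ and is non-sofic, so $H\geq N$ is non-sofic by subgroup closure.
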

\begin{proof}
The second assertion is the contrapositive of
\cite[Proposition II.4.1(5)]{CapraroLupini}. For the first, let $L\leq G$
have finite index and let $N=\bigcap_{g\in G}gLg^{-1}$ be its core.
The action on the finite coset space $G/L$ has kernel $N$, so
$N\trianglelefteq G$, $N\leq L$ and $G/N$ is finite. If $L$ were sofic,
then $N$ would be sofic by
\cite[Proposition II.4.1(1)]{CapraroLupini}, contradicting the second
assertion. Compare the finite-index argument in
\cite[proof of Theorem 1.10]{Ersoy}.
\end{proof}

The following lemma collects
\cite[Proposition 2.4(1)--(2) and Remark 1.11]{Ersoy} and elementary
consequences needed below.

\begin{lemma}\label{lem:minimal-restrictions}
A minimal non-sofic group $M$ is finitely generated, has no proper
subgroup of finite index and has no non-trivial amenable quotient. It
is co-Hopfian and is neither locally graded nor $\omega$-non-sofic.
\end{lemma}
\begin{proof}
If $M$ were not finitely generated, all its finitely generated subgroups
would be proper and sofic. Locality
\cite[Exercise II.3.2]{CapraroLupini} would imply that $M$ is sofic.
A proper subgroup of finite index is excluded by
Lemma~\ref{lem:finite-index} and minimality. If $M/N$ were a non-trivial
amenable quotient, then $N<M$ would be sofic and
\cite[Proposition II.4.1(5)]{CapraroLupini} would give a contradiction.

An injective endomorphism with proper image would give a proper
non-sofic subgroup isomorphic to $M$. Thus $M$ is co-Hopfian. Since $M$
is non-trivial and finitely generated, local gradedness would give a
proper subgroup of finite index. Finally, an $\omega$-non-sofic chain
would already provide a proper non-sofic subgroup at its first step.
\end{proof}

We will also use two elementary extension observations.

\begin{lemma}\label{lem:graded}
The class of locally graded groups is closed under subgroups and
extensions. Locally finite groups and residually finite groups are
locally graded. Consequently, a locally graded group contains no minimal
non-sofic subgroup.
\end{lemma}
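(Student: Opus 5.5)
The proof splits into four claims: subgroups, extensions, the two named classes, and the consequence about minimal non-sofic subgroups.

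\emph{Subgroups.} If $G$ is locally graded and $H\leq G$, then every non-trivial finitely generated subgroup of $H$ is also one of $G$. It therefore has a proper subgroup of finite index, so $H$ is locally graded.

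\emph{Extensions.} This is the step needing the most care. Let $N\trianglelefteq G$ with $N$ and $G/N$ locally graded, and let $1\ne K\leq G$ be finitely generated. Let $\pi:G\to G/N$ be the projection, and split into two cases.
\begin{itemize}
\item If $\pi(K)\ne1$, then $\pi(K)$ is a non-trivial finitely generated subgroup of $G/N$. It has a proper subgroup $\bar L$ of finite index, and $K\cap\pi^{-1}(\bar L)$ is a proper finite-index subgroup of $K$.
\item If $\pi(K)=1$, then $K\leq N$, and local gradedness of $N$ applies directly.
\end{itemize}
In neither case do we need finite generation of $K\cap N$; this avoids the usual trap in such arguments.

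\emph{Locally finite and residually finite groups.} A non-trivial finitely generated subgroup $K$ of a locally finite group is finite, so $\{1\}$ is a proper subgroup of finite index. If $G$ is residually finite and $1\ne K\leq G$, choose $1\ne k\in K$ and a finite-index normal subgroup $M\trianglelefteq G$ with $k\notin M$. Then $K\cap M$ is a proper subgroup of finite index in $K$.

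\emph{Consequence.} Suppose a locally graded group contains a minimal non-sofic subgroup $M$. By the subgroup closure just proved, $M$ is locally graded. This contradicts Lemma~\ref{lem:minimal-restrictions}, which says a minimal non-sofic group is never locally graded.
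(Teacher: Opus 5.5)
Your proposal is correct and matches the paper's proof essentially step for step. The steps coincide as follows: the same two-case split for extensions, pulling back a proper finite-index subgroup of $\pi(K)$ to $K$; the same arguments for locally finite and residually finite groups; and the same combination of subgroup closure with Lemma~\ref{lem:minimal-restrictions} for the final assertion.
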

\begin{proof}
Inheritance by subgroups follows immediately from the definition.
Suppose $N\trianglelefteq L$ and both $N$ and $L/N$ are locally graded.
Let $1\ne A\leq L$ be finitely generated, and let $\pi:L\to L/N$ be the
quotient map. If $A\leq N$, use local gradedness of $N$. Otherwise,
$\pi(A)$ is a non-trivial finitely generated group. A proper subgroup
of finite index in $\pi(A)$ pulls back to a proper subgroup of finite
index in $A$.

A non-trivial finitely generated subgroup of a locally finite group is
finite. In a residually finite group, a non-trivial element of such a
subgroup survives in a finite quotient; the kernel of the restricted
map has proper finite index. The last assertion now follows from
Lemma~\ref{lem:minimal-restrictions}.
\end{proof}

The following special case will be applied to the Kun--Thom wreath products.

\begin{lemma}\label{lem:lf-by-rf}
Let $L$ be a group and let $N\trianglelefteq L$. If $N$ is locally finite
and $L/N$ is residually finite, then $L$ is locally graded.
\end{lemma}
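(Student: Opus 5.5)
The plan is to derive this directly from Lemma~\ref{lem:graded}, which says that locally graded groups are closed under extensions and that locally finite and residually finite groups are locally graded. The statement is the special case with $N$ locally finite and $L/N$ residually finite.

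First, since $N$ is locally finite, Lemma~\ref{lem:graded} shows that $N$ is locally graded. Concretely, a non-trivial finitely generated subgroup of $N$ is finite, and its trivial subgroup is a proper subgroup of finite index.

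Second, since $L/N$ is residually finite, Lemma~\ref{lem:graded} shows that $L/N$ is locally graded. Concretely, for a non-trivial finitely generated $B\leq L/N$, pick $1\ne b\in B$ and a finite quotient of $L/N$ in which $b$ survives. The kernel of the restriction of that quotient to $B$ is then a proper subgroup of finite index in $B$.

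Third, the extension clause of Lemma~\ref{lem:graded}, applied to $N\trianglelefteq L$, gives that $L$ is locally graded. If one wants to avoid citing the extension clause, the argument can be repeated inline. Let $1\ne A\leq L$ be finitely generated. If $A\leq N$, then $A$ is finite and $\{1\}$ has finite index in $A$. Otherwise $\pi(A)\ne1$ is finitely generated, where $\pi:L\to L/N$ is the quotient map. Pulling back a proper finite-index subgroup of $\pi(A)$ along $\pi|_A$ gives a proper finite-index subgroup of $A$. Properness is preserved because $\pi|_A$ is surjective onto $\pi(A)$.

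There is no real obstacle here. The only point needing care is the case split $A\leq N$ versus $A\not\leq N$, which is already handled in the proof of Lemma~\ref{lem:graded}.
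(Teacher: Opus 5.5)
Your proposal is correct and essentially the paper's argument: the same case split between $A\leq N$ (where $A$ is finite and $\{1\}$ is proper of finite index) and $A\not\leq N$. In the second case the paper composes $\pi$ directly with a finite quotient of $L/N$ that is non-trivial on $\pi(A)$, instead of routing through the extension clause of Lemma~\ref{lem:graded}; the two are the same thing.
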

\begin{proof}
Let $1\ne A\leq L$ be finitely generated. If $A\leq N$, then $A$ is
finite, so $\{1\}$ has proper finite index in $A$. Otherwise, residual
finiteness of $L/N$ gives a homomorphism from $L/N$ to a finite group
which is non-trivial on the image of $A$. Its composite with the
quotient map restricts to a non-trivial homomorphism on $A$, whose
kernel has proper finite index.
\end{proof}

\Needspace{8\baselineskip}
The following observation also underlies the conditional discussion of
Thompson groups in \cite[Section 1]{Ersoy}.

\begin{lemma}\label{lem:self-embedding}
A non-sofic group with a proper self-embedding is $\omega$-non-sofic.
\end{lemma}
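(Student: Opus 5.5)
The plan is to iterate the self-embedding. Let $G$ be non-sofic and let $\varphi\colon G\to G$ be injective with $\varphi(G)\neq G$. Set $G_i=\varphi^i(G)$ for $i\in\N$, so $G_0=G$, and aim to show that
\[
 G=G_0>G_1>G_2>\cdots
\]
is a chain of the kind required in the definition of an $\omega$-non-sofic group.

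First I check that each $G_i$ is non-sofic. The map $\varphi^i$ is a composite of injective homomorphisms, hence injective, so $G_i\cong G$. Soficity is an isomorphism invariant, so $G_i$ is non-sofic.

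Next I check the inclusions. Since $\varphi(G)\subseteq G$, applying $\varphi^i$ gives $G_{i+1}=\varphi^i(\varphi(G))\subseteq\varphi^i(G)=G_i$.

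The only point needing care is strictness. Pick $g\in G\setminus\varphi(G)$ and suppose $\varphi^i(g)\in G_{i+1}$. Then $\varphi^i(g)=\varphi^{i+1}(h)=\varphi^i(\varphi(h))$ for some $h\in G$. Injectivity of $\varphi^i$ gives $g=\varphi(h)\in\varphi(G)$, a contradiction. Hence $\varphi^i(g)\in G_i\setminus G_{i+1}$, so each inclusion is proper, and the chain is strictly descending with every term non-sofic. No step presents a real obstacle; strictness is the only place where injectivity is used in an essential way.
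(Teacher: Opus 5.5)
Your proposal is correct and follows the same route as the paper: iterate $\varphi$ to get the chain $\varphi^i(G)$, with every term isomorphic to $G$ and hence non-sofic. The paper simply says that applying $\varphi^i$ to the strict inclusion $\varphi(G)<G$ gives $\varphi^{i+1}(G)<\varphi^i(G)$; you spell out the injectivity argument behind that step, which is harmless extra detail.
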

\begin{proof}
Let $\varphi:G\to G$ be injective with $\varphi(G)<G$. Applying
$\varphi^i$ to this strict inclusion gives
$\varphi^{i+1}(G)<\varphi^i(G)$. Every group in the resulting chain is
isomorphic to $G$ and hence non-sofic.
\end{proof}

The property of being $\omega$-non-sofic passes to overgroups. Indeed,
if $L<G$ has such a chain, prepend $G$ to that chain; $G$ is non-sofic
because it contains $L$ \cite[Proposition II.4.1(1)]{CapraroLupini}.

The finite-index argument can also be applied to inverse images of
subgroups in a residually finite quotient. This is a variant of the
chain argument in \cite[proof of Theorem 1.10]{Ersoy}; the intersection
is recorded here for later use.

\begin{proposition}\label{prop:rf-chain}
Suppose that $L$ is non-sofic and $\pi:L\twoheadrightarrow Q$, where $Q$
is infinite and residually finite. There is a chain
\[
 L=L_0>L_1>L_2>\cdots
\]
of finite-index normal non-sofic subgroups such that
\[
 \bigcap_{i\geq0}L_i=\ker\pi.
\]
In particular, $L$ is $\omega$-non-sofic.
\end{proposition}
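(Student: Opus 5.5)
The plan is to build the chain in $Q$ first and then pull it back along $\pi$. Since $Q$ is residually finite and countable (all groups are countable), we enumerate the non-trivial elements $q_1,q_2,\ldots$ of $Q$. For each $j$ we choose a finite-index normal subgroup $M_j\trianglelefteq Q$ with $q_j\notin M_j$. Put $N_k=M_1\cap\cdots\cap M_k$. Each $N_k$ is a finite-index normal subgroup of $Q$, the sequence is decreasing, and $\bigcap_k N_k=\{1\}$. Because $Q$ is infinite, no $N_k$ is trivial, and a finite-index subgroup of an infinite group is infinite. So the index $[Q:N_k]$ is unbounded in $k$.

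We then make the chain strict by passing to a subsequence. Set $k_0=0$ with $N_0=Q$. Given $k_i$, the group $N_{k_i}$ is non-trivial, so it contains some $q_j\neq1$, and then $N_{j}\not\ni q_j$. Taking $k_{i+1}=\max(j,k_i+1)$ gives $N_{k_{i+1}}<N_{k_i}$. The intersection of the subsequence is still trivial, because it is a cofinal subsequence of a decreasing sequence.

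Next we pull back. Define $L_i=\pi^{-1}(N_{k_i})$. Each $L_i$ is normal in $L$ with $L/L_i\cong Q/N_{k_i}$ finite, and $L_0=L$. Strictness is preserved because $\pi$ is surjective, so $\pi(L_i)=N_{k_i}$ and distinct subgroups of $Q$ have distinct preimages. The intersection is $\bigcap_i L_i=\pi^{-1}\bigl(\bigcap_i N_{k_i}\bigr)=\pi^{-1}(1)=\ker\pi$. Non-soficity of each $L_i$ follows from Lemma~\ref{lem:finite-index}, since $L_i$ has finite index in the non-sofic group $L$. The chain then witnesses that $L$ is $\omega$-non-sofic.

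The only point needing care is strictness. It relies on the infiniteness of $Q$ (otherwise the $N_k$ could stabilise at a trivial group) and on the surjectivity of $\pi$; everything else is routine.
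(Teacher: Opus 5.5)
Your proposal is correct and follows essentially the same route as the paper. You build a strictly descending chain of finite-index normal subgroups of $Q$ with trivial intersection from residual finiteness, pull it back along the surjection $\pi$, and get non-soficity from Lemma~\ref{lem:finite-index}; the only difference is that you extract the strict chain via an explicit subsequence where the paper simply deletes repetitions.
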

\begin{proof}
Enumerate $Q\setminus\{1\}=\{q_1,q_2,\ldots\}$. Choose finite-index
normal subgroups $N_j\trianglelefteq Q$ with $q_j\notin N_j$, and put
$H_0=Q$ and $H_j=N_1\cap\cdots\cap N_j$. Their intersection is trivial.
There are infinitely many distinct terms: otherwise the chain would
stabilize at the trivial subgroup, which has infinite index in $Q$.
Deleting repetitions gives a strictly descending chain
$Q=Q_0>Q_1>\cdots$ of finite-index normal subgroups with trivial
intersection. Set $L_i=\pi^{-1}(Q_i)$. Surjectivity makes all inclusions
strict, and the asserted intersection follows from inverse images.
Each $L_i$ is non-sofic by Lemma~\ref{lem:finite-index}.
\end{proof}

\section{Non-sofic Tarski monsters}
\label{sec:existence}

\subsection{The hyperbolic input and Semenov's quotients}

Sandhu and Thom prove the following result.

\begin{theorem}[Sandhu--Thom, {\cite[Corollary B]{SandhuThom}}]
\label{thm:ST}
There is a non-trivial torsion-free word-hyperbolic group $H$ such that
\begin{equation}\label{eq:no-sofic-quotient}
 N\trianglelefteq H,\quad N\ne H
 \quad\Longrightarrow\quad H/N\text{ is non-sofic}.
\end{equation}
\end{theorem}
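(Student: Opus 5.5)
This theorem is cited from Sandhu--Thom \cite[Corollary B]{SandhuThom} rather than proved in the paper, so the plan reconstructs the shape of their argument. We do not need the full strength of their analytic machinery, only the outline.

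The starting point is a torsion-free hyperbolic group $H_0$ that is not sofic in a quantitative, hereditary sense. Sandhu and Thom construct finitely presented groups whose defining relations force any approximate permutation representation to be far from faithful on a fixed element. Their mechanism is a stability or rigidity phenomenon for a specific family of words, in the spirit of constructions of non-sofic candidates from Higman-type relations. They then pass to a torsion-free hyperbolic group using small cancellation or Rips-type constructions, so that the key relations survive. The plan is to take their group $H_0$ together with a non-trivial element $w$ whose image must become trivial in every sofic quotient.

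The second step upgrades ``some element dies in sofic quotients'' to ``every sofic quotient is trivial''. Here one uses the standard quotient theory of hyperbolic groups (Ol'shanskii's common-quotient and SQ-universality results). First replace $H_0$ by a torsion-free hyperbolic quotient $H$ in which the normal closure of every non-trivial element contains a conjugate of the image of $w$. Alternatively, choose $H$ to be a common torsion-free hyperbolic quotient of $H_0$ and a group in which $w$ normally generates.

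Now let $N\ne H$ be normal with $H/N$ sofic. The image of $w$ dies in $H/N$, so $N$ contains the normal closure of $w$. By construction, that normal closure is all of $H$, so $N=H$, a contradiction. This gives \eqref{eq:no-sofic-quotient}. Non-triviality and torsion-freeness are preserved by choosing the quotient carefully.

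The main obstacle is the first step: producing a non-trivial element that vanishes in every sofic quotient. This is the genuinely deep input. It requires showing that sofic approximations of the relevant relations are forced to be degenerate, which is the content of Sandhu--Thom's analysis. In the paper itself, we simply take Theorem~\ref{thm:ST} as given from \cite{SandhuThom}.
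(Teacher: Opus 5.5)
The paper does not prove this statement. It quotes Sandhu--Thom and records only that their Theorem~A gives a non-sofic torsion-free hyperbolic group $G_0$ (the fundamental group of a finite locally $\mathrm{CAT}(-1)$ triangle complex), and that Corollary~B follows by an existence argument passing to another group. Your architecture is a sound way to make that deduction: find a non-trivial element killed by all sofic quotients, then pass to a torsion-free hyperbolic quotient in which it normally generates. However, you have misplaced the difficulty. Once some torsion-free hyperbolic $H_0$ is known to be non-sofic, the element $w$ comes for free. Let $N_s$ be the intersection of all $N\trianglelefteq H_0$ with $H_0/N$ sofic. Then $H_0/N_s$ is residually sofic, hence sofic: on a finite set, finitely many of these quotients already separate points, and finite products and subgroups of sofic groups are sofic. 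So $N_s\ne1$, and any $1\ne w\in N_s$ works. The deep input is only Theorem~A itself. Your remarks about Higman-type relations and Rips-type constructions are speculative, and nothing in the deduction depends on how $G_0$ is built.

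The real gap is Step~2: it never produces the property your last paragraph uses, namely $\langle\!\langle\bar w\rangle\!\rangle=H$. Your first formulation (every non-trivial normal closure contains $\bar w$) is a different condition and does not imply normal generation. In a torsion-free hyperbolic group it even forces $\bar w=1$, since by Dehn filling $\bar w\notin\langle\!\langle\bar w^n\rangle\!\rangle$ for large $n$. Your second formulation (a common quotient of $H_0$ and ``a group in which $w$ normally generates'') is not well-posed, because $w$ lives in $H_0$. SQ-universality plays no role either. What you need is Ol'shanskii's theorem on $G$-subgroups, in the form due to Osin that also preserves torsion-freeness: a non-elementary subgroup $K$ of a torsion-free hyperbolic group $G$ maps onto some non-elementary torsion-free hyperbolic quotient of $G$. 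Apply it to $K=\langle\!\langle w\rangle\!\rangle\trianglelefteq H_0$. This $K$ is non-elementary because $H_0$ is non-elementary (being non-amenable), and an infinite normal subgroup of a non-elementary hyperbolic group cannot be virtually cyclic. Every sofic quotient of the resulting $H$ is a sofic quotient of $H_0$, so it kills the image of $K$, which is all of $H$. This gives the required implication, with non-triviality and torsion-freeness supplied by the cited theorem rather than by ``choosing the quotient carefully''.
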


Fix such an $H$. It is non-sofic by taking $N=1$ in
\eqref{eq:no-sofic-quotient}, and hence non-elementary: virtually cyclic
groups are amenable and therefore sofic
\cite[Sections II.3--II.4]{CapraroLupini}. We need the following
quotient theorems of Semenov. They were announced in
\cite{Semenov1993}; we use the statements and numbering of the full
manuscript \cite{SemenovFull}.

\begin{theorem}[Semenov, {\cite[Theorems A and B]{SemenovFull}}]
\label{thm:Semenov}
Let $\Gamma$ be a non-elementary torsion-free hyperbolic group.
\begin{enumerate}[label=\textup{(\roman*)}]
\item There is $n_0=n_0(\Gamma)$ such that, for every odd integer
$n\geq n_0$, some infinite quotient of $\Gamma$ has all its proper
subgroups cyclic of order dividing $n$.
\item There is an infinite quotient of $\Gamma$ whose non-trivial
proper subgroups are infinite cyclic.
\end{enumerate}
\end{theorem}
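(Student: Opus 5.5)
The plan is to follow Ol'shanskii's method of iterated small-cancellation quotients of hyperbolic groups, which is the framework Semenov works in. The key input is Ol'shanskii's theorem on \emph{G-subgroups}, from his paper on residualing homomorphisms. Let $G$ be a non-elementary hyperbolic group, $H_1,\ldots,H_k\leq G$ non-elementary subgroups, and $F\subseteq G$ finite. Then $G$ has a non-elementary hyperbolic quotient $\bar G$ that is injective on $F$ and in which the image of each $H_j$ is all of $\bar G$.

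Part (ii) is the cleaner case, so I would do it first. Enumerate all pairs $(u_i,v_i)$ of elements of $\Gamma$ and build a sequence of torsion-free non-elementary hyperbolic quotients
\[
 \Gamma=\Gamma_0\twoheadrightarrow\Gamma_1\twoheadrightarrow\Gamma_2\twoheadrightarrow\cdots.
\]
At stage $i$, look at the images of $u_i,v_i$ in $\Gamma_{i-1}$. In a torsion-free hyperbolic group every elementary subgroup is cyclic, so $\langle u_i,v_i\rangle$ is either cyclic or non-elementary. If it is cyclic, set $\Gamma_i=\Gamma_{i-1}$. Otherwise apply the G-subgroup theorem with $F$ the ball of radius $i$ in $\Gamma_{i-1}$, so that $\langle u_i,v_i\rangle$ becomes the whole quotient $\Gamma_i$. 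I must also check, by inspecting the small-cancellation relations (all of the form $w=\text{word in }u_i,v_i$, without proper powers), that $\Gamma_i$ stays torsion-free.

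Let $\Gamma_\infty$ be the direct limit. It is infinite because successive balls embed, and it is torsion-free. Take a proper subgroup $P$ of $\Gamma_\infty$. If $P$ is not cyclic, then, using that $\Gamma_\infty$ is a union of images of torsion-free groups, $P$ contains two elements $a,b$ generating a non-cyclic subgroup. Their preimages $(u_i,v_i)$ generate a non-cyclic subgroup at stage $i-1$, so $\langle a,b\rangle=\Gamma_\infty$, a contradiction. Hence every non-trivial proper subgroup is infinite cyclic.

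For part (i) I would run the same scheme with two kinds of interleaved steps. The first kind imposes $x^n=1$ on a shortest element of infinite order, using the Ivanov--Ol'shanskii periodic-quotient machinery for hyperbolic groups; this is valid for odd $n\geq n_0(\Gamma)$. The second kind is the G-subgroup step for pairs generating non-elementary subgroups. The invariant to maintain at every stage is: $\Gamma_i$ is non-elementary hyperbolic, and every finite subgroup of $\Gamma_i$ is cyclic of order dividing $n$. In the limit every element then has order dividing $n$, and a non-cyclic proper subgroup would contain a pair that is non-elementary at some stage, contradicting the G-subgroup steps.

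The main obstacle is exactly this invariant on elementary subgroups in the torsion case. Maximal elementary subgroups of $\Gamma_i$ are finite-by-cyclic, and I must show that they stay cyclic and that no new non-cyclic finite subgroups appear when relators are added. I would first try to choose relators of sufficiently high small-cancellation grade, as in Ol'shanskii's graded diagrams. The aim is that every finite subgroup of the quotient is conjugate either into the image of a finite subgroup of $\Gamma_{i-1}$ or into the cyclic group $\langle x\rangle/\langle x^n\rangle$ just created. The second ingredient would be a matching statement that each $E(g)$ is cyclic, which guarantees that every two-generated non-cyclic subgroup is eventually non-elementary and so gets absorbed.
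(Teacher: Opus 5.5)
The paper gives no proof of this statement; it quotes Semenov's Theorems A and B and uses them as a black box. Your outline therefore has to stand on its own, and as written it has genuine gaps.

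The decisive gap in part (ii) is the sentence ``if $P$ is not cyclic, then $P$ contains $a,b$ generating a non-cyclic subgroup''. A group all of whose two-generated subgroups are cyclic is only \emph{locally} cyclic. In the torsion-free case this means that $P$ embeds in $\mathbb{Q}$. The group $\Z[1/2]$ is a non-cyclic example, and it is itself a union of torsion-free (indeed cyclic) groups, so your justification does not rule it out. What your limit argument actually proves is that every proper subgroup of $\Gamma_\infty$ is locally cyclic.

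To upgrade this, you must stop elements from acquiring roots of unbounded order. Let $\langle z_j\rangle$ be the maximal cyclic subgroup of $\Gamma_j$ containing $x$, and write $x=z_j^{k_j}$. Then $k_{j-1}\mid k_j$. If $k_j\to\infty$, the union of the images of the $\langle z_j\rangle$ is a non-cyclic abelian subgroup of $\Gamma_\infty$, and it is proper because $\Gamma_\infty$ is non-abelian. Injectivity on balls does not bound $k_j$, since a new root of $x$ may be long compared with the injectivity radius. You need an additional property of each quotient step. For instance, for $x$ in a growing finite set, the image of the maximal cyclic subgroup containing $x$ should remain maximal cyclic in the quotient. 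This is not part of the G-subgroup theorem as you state it; it has to be extracted from the small-cancellation estimates themselves.

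In part (i) this issue disappears, since a locally cyclic group of exponent dividing $n$ is cyclic. Besides the invariant on finite and elementary subgroups that you yourself leave open, two further steps fail as stated.

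\emph{First}, the threshold $n_0(\Gamma)$ of the Ivanov--Ol'shanskii theory is uniform only for one graded construction over the fixed group $\Gamma$. The exponent for which adding $x^n=1$ to an intermediate group $\Gamma_i$ is controlled depends on the hyperbolicity data of $\Gamma_i$. These data deteriorate along your interleaved sequence, so with $n$ fixed you cannot simply iterate black-box quotients.

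\emph{Second}, when torsion is present, Ol'shanskii's theorem requires the $H_j$ to be G-subgroups. This imposes a condition on the finite subgroups normalised by $H_j$, and the statement fails for arbitrary non-elementary subgroups. For example, take $G=(A\times C)*B$ with $A$ non-elementary, $C\ne1$ finite and $B\ne1$, and let $H=A$. For $b\in B\setminus\{1\}$ and $c\in C\setminus\{1\}$ we have $[b,c]\ne1$. If $\bar H=\bar G$, then $\bar c$ is central, so no quotient injective on $\{1,[b,c]\}$ can have this property.

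The real content of Semenov's Theorem~A is a single graded small-cancellation scheme that adds periodic and generating relators together with uniform estimates. This can be done in the style of Ol'shanskii's book, or by a rescaled cone-off argument as in Delzant--Gromov and Coulon, and it must also control finite and elementary subgroups. So part (i) is a plan rather than a proof.
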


\begin{proof}[Proof of Theorem~\ref{thm:main-existence}]
Let $p\geq n_0(H)$ be an odd prime. By
Theorem~\ref{thm:Semenov}\textup{(i)}, the group $H$ has an infinite
quotient $T_p$ whose proper subgroups are cyclic of order dividing $p$.
By \eqref{eq:no-sofic-quotient}, $T_p$ is non-sofic, so it is not cyclic.
Every non-identity element therefore generates a proper subgroup of
order $p$. Thus $T_p$ is a Tarski monster of exponent $p$.

By Theorem~\ref{thm:Semenov}\textup{(ii)}, the same group $H$ has an
infinite quotient $T$ whose non-trivial proper subgroups are infinite
cyclic. Again \eqref{eq:no-sofic-quotient} makes $T$ non-sofic and hence
non-cyclic. Its cyclic subgroups are proper, so every non-identity
element has infinite order. Finally, $T$ is finitely generated as a
quotient of the hyperbolic group $H$. It is therefore a torsion-free
Tarski monster.
\end{proof}

\begin{corollary}\label{cor:monsters-minimal}
Every non-sofic Tarski monster is minimal non-sofic. Such a group is
co-Hopfian and is neither locally graded nor $\omega$-non-sofic.
\end{corollary}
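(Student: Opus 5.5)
The plan is to verify directly that every proper subgroup of a non-sofic Tarski monster is sofic, and then read off the remaining assertions from Lemma~\ref{lem:minimal-restrictions}. Let $M$ be a non-sofic Tarski monster, either of exponent $p$ or torsion-free.

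\emph{Step 1: proper subgroups are cyclic.} In the exponent $p$ case, by definition every non-trivial proper subgroup has order $p$, hence is cyclic. In the torsion-free case, every non-trivial proper subgroup is infinite cyclic by definition. The trivial subgroup is cyclic in both cases.

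\emph{Step 2: cyclic groups are sofic.} Cyclic groups are abelian, hence amenable, hence sofic by \cite[Proposition II.3.1]{CapraroLupini}; alternatively they are residually finite (\cite[Section II.3]{CapraroLupini}). Combining Steps 1 and 2, every proper subgroup of $M$ is sofic. Since $M$ is non-sofic by hypothesis, it is minimal non-sofic in the sense of the definition above.

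\emph{Step 3: remaining properties.} Lemma~\ref{lem:minimal-restrictions} applies to $M$. It shows at once that $M$ is co-Hopfian and is neither locally graded nor $\omega$-non-sofic.

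There is no real obstacle here. The only point to watch is that the definitions cover every proper subgroup, including the trivial one. In the torsion-free case, one must also not silently assume simplicity or finite generation: minimality uses only the subgroup condition, and finite generation is in any case supplied by Lemma~\ref{lem:minimal-restrictions}.
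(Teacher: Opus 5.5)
Your proposal is correct and follows the paper's proof: proper subgroups of a Tarski monster are cyclic, cyclic groups are sofic, so the group is minimal non-sofic. The remaining properties then come from Lemma~\ref{lem:minimal-restrictions}.
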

\begin{proof}
All its proper subgroups are cyclic, hence sofic
\cite[Section II.3]{CapraroLupini}. Apply
Lemma~\ref{lem:minimal-restrictions}.
\end{proof}

Here the prime threshold depends on the fixed group $H$; the monsters
of different prime exponents are separate quotients of $H$.

\begin{lemma}\label{lem:tarski-basic}
Every Tarski monster of prime exponent is simple and two-generated.
Every torsion-free minimal non-sofic group with cyclic proper subgroups
is two-generated.
\end{lemma}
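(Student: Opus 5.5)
For a Tarski monster $T$ of exponent $p$, two-generation comes first. Pick $1\ne x\in T$. Since $T$ is infinite and $\langle x\rangle$ has order $p$, there is $y\notin\langle x\rangle$. The subgroup $\langle x,y\rangle$ properly contains $\langle x\rangle$. If it were proper, it would have order $p$, yet it contains a subgroup of order $p$ properly, which is impossible. Hence $\langle x,y\rangle=T$. In fact any two elements not generating the same cyclic subgroup generate $T$.

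For simplicity, let $1\ne N\trianglelefteq T$ be proper. Then $N$ has order $p$, say $N=\langle x\rangle$. Choose $y\notin N$. Then $\langle x,y\rangle=T$ by the previous paragraph. On the other hand, $N\langle y\rangle$ is a subgroup because $N$ is normal, and its order is at most $p^2$. Since $T$ is infinite, $N\langle y\rangle$ is proper. It contains $N$ and $y\notin N$, so it properly contains a subgroup of order $p$, a contradiction. Thus $T$ is simple. Non-triviality of $T$ is clear.

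For the torsion-free statement, let $G$ be minimal non-sofic with all proper subgroups cyclic. Lemma~\ref{lem:minimal-restrictions} shows that $G$ is finitely generated. Torsion-freeness together with non-triviality means every proper non-trivial subgroup is infinite cyclic. Also, $G$ is not cyclic, since cyclic groups are sofic. Pick $1\ne x$. Then $\langle x\rangle$ is proper, so there is $y\notin\langle x\rangle$.

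The main step is to show that $\langle x,y\rangle=G$. Suppose it is proper. Then it is cyclic, say $\langle z\rangle$. This alone gives no contradiction yet, so I instead argue by finite generation. Let $G=\langle g_1,\dots,g_k\rangle$ with $k$ minimal. If $k\ge3$, then $\langle g_1,g_2\rangle$ is proper, since otherwise $k\le 2$. So it is cyclic, $\langle g_1,g_2\rangle=\langle c\rangle$, and then $G=\langle c,g_3,\dots,g_k\rangle$ with $k-1$ generators, contradicting minimality. Hence $k\le2$. Moreover $k\ne1$ because $G$ is not cyclic, so $G$ is two-generated.

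The same reduction argument also gives the prime-exponent two-generation directly, but the explicit argument above additionally yields simplicity. The only subtle point is making sure that proper subgroups being cyclic is applied to $\langle g_1,g_2\rangle$. That holds precisely because this subgroup is proper whenever $k\ge3$ is minimal.
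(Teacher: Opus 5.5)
Your argument for a Tarski monster of exponent $p$ is the same as the paper's. You get two-generation from $y\notin\langle x\rangle$, and simplicity from the subgroup $N\langle y\rangle$, whose order is at most $p^2$.

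For the torsion-free statement your argument is correct but takes a different route. You take finite generation from Lemma~\ref{lem:minimal-restrictions} and run a minimal-generating-set reduction. If $k\geq3$, the proper subgroup $\langle g_1,g_2\rangle$ is cyclic and so can be replaced by a single generator, contradicting minimality of $k$. You then exclude $k\leq1$ because $G$ is not cyclic.

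The paper instead notes that $G$ is non-abelian, since abelian groups are amenable and hence sofic. It takes two non-commuting elements, which generate a non-abelian and hence non-cyclic subgroup; that subgroup therefore cannot be proper.

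The paper's route needs no finite-generation input and gives more: any two non-commuting elements generate $G$. Yours proves a purely group-theoretic fact: every finitely generated non-cyclic group whose proper subgroups are all cyclic is two-generated. Neither argument actually uses torsion-freeness.

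You were right to abandon your first attempt with an arbitrary $y\notin\langle x\rangle$. For example, $x=z^2$ and $y=z^3$ generate the proper cyclic subgroup $\langle z\rangle$.
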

\begin{proof}
Let $T$ be a Tarski monster of exponent $p$. Choose $1\ne a\in T$ and
$b\notin\langle a\rangle$. The subgroup $\langle a,b\rangle$ cannot
be proper, since a proper non-trivial subgroup has order $p$.
Thus $T=\langle a,b\rangle$.

If $1<N\trianglelefteq T$ were proper, then $|N|=p$. Choose
$x\notin N$. The subgroup $N\langle x\rangle$ would have order $p^2$,
contrary to the description of proper subgroups of the infinite group
$T$. Thus $T$ is simple.

A minimal non-sofic group is not abelian, since abelian groups are
amenable and hence sofic
\cite[Section II.3 and Proposition II.3.1]{CapraroLupini}. In the torsion-free case under consideration, two
non-commuting elements generate a non-cyclic subgroup, which cannot be
proper. This proves the last assertion.
\end{proof}

\begin{remark}\label{rem:odd-bounds}
Semenov's Theorem A applies to every sufficiently large odd integer
$n$, not just to primes. Therefore the same argument also gives minimal
non-sofic quotients of $H$ all of whose proper subgroups are cyclic of
order dividing $n$. The exact prime-power exponents established below
will be obtained from Kulikova's stronger quotient theorem.
\end{remark}

\subsection{Continuum many torsion-free simple examples}

\begin{proof}[Proof of Theorem~\ref{thm:main-continuum}]
By \cite[Corollary 4.10]{MinasyanOsinWitzel}, there is a family of
$2^{\aleph_0}$ pairwise non-quasi-isometric torsion-free Tarski monsters,
each a quotient of every non-cyclic torsion-free hyperbolic group.
Moreover, all maximal subgroups of each member are malnormal. In
particular, each member $T$ is a non-trivial quotient of our fixed
$H$. Equation~\eqref{eq:no-sofic-quotient} makes $T$ non-sofic, and its
cyclic proper subgroups make it minimal non-sofic.

The malnormality assertion implies simplicity.
Suppose $1<N\trianglelefteq T$ is proper. Since $T$ is finitely generated,
$N$ is contained in a maximal proper subgroup $M$: the union of a chain
of proper subgroups containing $N$ is still proper, since otherwise
one member would contain a finite generating set of $T$, and Zorn's
lemma applies. Compare the maximal-subgroup argument in
\cite[proof of Proposition 2.4(1)]{Ersoy}. Choose $g\notin M$.
Normality gives
\[
 N=gNg^{-1}\leq M\cap gMg^{-1}=1,
\]
contradicting malnormality. Thus $T$ is simple. The groups are pairwise non-quasi-isometric by the cited corollary.
\end{proof}

\Needspace{10\baselineskip}
\section{Further consequences}
\label{sec:further-existence}

We keep the group $H$ from Theorem~\ref{thm:ST}. The following
applications use other quotient and embedding theorems.

\subsection{Prime-power exponents and non-simple examples}

We use \cite[Theorem 6]{KulikovaPreprint}, whose published version is
\cite{Kulikova}.

\begin{theorem}[Kulikova, {\cite[Theorem 6]{KulikovaPreprint}}]\label{thm:Kulikova}
Let $\Gamma$ be a non-cyclic torsion-free hyperbolic group. For every
sufficiently large prime $p$ and every integer $k\geq0$, $\Gamma$ has
a non-abelian quotient $K$ of exponent $p^{k+1}$ with a central cyclic
subgroup $D$ of order $p^k$. Every proper subgroup of $K$ is cyclic,
and every subgroup of $K$ either contains $D$ or is contained in $D$.
\end{theorem}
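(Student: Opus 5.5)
The statement is quoted from \cite[Theorem 6]{KulikovaPreprint}, and in the paper it is simply invoked. If I had to reprove it, I would follow Ol'shanskii's scheme for central extensions of Tarski monsters, run relative to the hyperbolic group $\Gamma$. The plan has four steps.

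First, I would produce a graded presentation of a Tarski-type quotient. Following the Ol'shanskii--Ivanov method as adapted to hyperbolic groups by Semenov and Kulikova, for $p$ large I would write the exponent-$p$ monster quotient $T$ of $\Gamma$ as
\[
 T=\Gamma/\langle\!\langle \mathcal R\rangle\!\rangle .
\]
Here $\mathcal R=\bigcup_i\mathcal R_i$ is built rank by rank. It consists of periodic relators $A^{p}$, for $A$ a minimal period of rank $i$, together with the ``Tarski'' relators that force any two elements generating a non-cyclic subgroup to generate everything. The essential output of the graded small cancellation theory is the contiguity and cell-cancellation lemmas. These make the presentation aspherical in Ol'shanskii's sense, so the relation module of $T$ over the presentation is free on the chosen relators.

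Second, I would centralise the relators. I would adjoin a new letter $z$ and form
\[
 \widetilde K=\bigl\langle \Gamma, z \bigm| [z,\Gamma]=1,\ R=z^{\epsilon_R}\ (R\in\mathcal R)\bigr\rangle ,
\]
where $\epsilon_R=1$ for the periodic relators $A^p$, suitably normalised so that every $A^p$ is identified with the same central element $z$, and $\epsilon_R=0$ for the others. Asphericity, via the Lyndon--Ol'shanskii argument on central extensions, shows that $\langle z\rangle$ is infinite cyclic and central in $\widetilde K$, with $\widetilde K/\langle z\rangle\cong T$. I would then set
\[
 K=\widetilde K/\langle z^{p^k}\rangle \quad\text{and}\quad D=\langle z\rangle/\langle z^{p^k}\rangle\cong\Z/p^k .
\]
Every element maps to an element of order $1$ or $p$ in $T$. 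Hence its $p$-th power lies in $D$, which gives exponent dividing $p^{k+1}$. The exponent is exactly $p^{k+1}$ because $A^p=z$ has order $p^k$ in $D$.

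Third, I would verify the subgroup structure. Let $S\le K$. If the image of $S$ in $T$ is trivial, then $S\le D$. If the image is all of $T$, then either $S=K$ or $S$ is proper with $SD=K$. In the latter case $S$ is normal, since $D$ is central, and $S\cap D<D$ gives a non-trivial abelian quotient of $T$, which is impossible because $T$ is simple. If the image is cyclic of order $p$, generated by $\bar a$, then $S\le\langle a\rangle D$. Here $a^p$ generates $D$, so $\langle a\rangle D=\langle a\rangle$ is cyclic, and therefore $S$ is cyclic. Moreover $S$ contains a lift of $\bar a$, whose $p$-th power generates $D$, so $S\supseteq D$. In every case $S$ either contains $D$ or is contained in $D$, and every proper subgroup is cyclic. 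The group $K$ is non-abelian because $T$ is.

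The main obstacle is the second step. One must show that $z$ really has infinite order in $\widetilde K$, and that all periodic relators can be coherently identified with a single central generator rather than with independent ones. This needs the full asphericity of the graded presentation over a hyperbolic base, which is the technical core of Kulikova's paper. I would first try to transfer Ol'shanskii's proof \cite[Chapter 9, Section 31]{OlshanskiiBook} verbatim, replacing free groups by $\Gamma$ and diagrams over free groups by diagrams relative to $\Gamma$.
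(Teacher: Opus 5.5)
The paper gives no proof of this statement; it is quoted from Kulikova. Your outline is the standard Ol'shanskii route: an aspherical graded presentation, a central extension by an infinite cyclic $\langle z\rangle$ with every $A^p$ set equal to $z$, and reduction modulo $z^{p^k}$. You correctly place the real work, namely that $z$ has infinite order, in a relative asphericity statement that you do not prove.

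Two corrections to that part. First, with proper-power relators the relation module is not free on the relators. The class of $A^p$ is fixed by $\bar A\ne1$, so it cannot belong to a free basis. What the central-extension argument actually needs is weaker: the coinvariants $N_\Gamma/[\Gamma,N_\Gamma]$, with $N_\Gamma$ the normal closure of $\mathcal R$ in $\Gamma$, should be free abelian on the relators. Second, $K$ is a quotient of $\Gamma$ only because $z=A^p$ lies in the image of $\Gamma$, and you should say so.

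The genuine gap is in your third step, in the case where $S$ is proper and $SD=K$. You correctly get $S\trianglelefteq K$ and $K/S\cong D/(S\cap D)\ne1$. But this is an abelian quotient of $K$, not of $T$. It factors through $K/D$ only if $D\le S$, which is exactly what fails here. Simplicity of $T$ cannot exclude this case: in $T\times D$, the subgroup $S=T\times1$ has precisely this form.

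What does exclude it is a property you use in the cyclic case without proof:

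$(\ast)$ for every $a\in K\setminus D$, the element $a^p$ generates $D$.

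To prove $(\ast)$, add to your first step the fact, built into the graded construction, that every non-trivial element of $T$ is conjugate to $\bar A^j$ for some period $A$. Since $\bar A$ has order $p$, necessarily $p\nmid j$. A lift then has the form $a=gA^jg^{-1}d$ with $d\in D$, so $a^p=z^jd^p$. For $k\ge1$ this generates $D$, because $z^j\notin D^p$ while $d^p\in D^p$.

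With $(\ast)$ your third step becomes immediate. If $S\not\le D$, choose $s\in S\setminus D$; then $D=\langle s^p\rangle\le S$. The subgroup $S/D$ of $T$ is either $T$, which gives $S=K$, or it has order $p$, which gives $S=\langle s\rangle D=\langle s\rangle$.
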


\begin{proof}[Proof of Theorem~\ref{thm:main-primepowers}]
Apply Theorem~\ref{thm:Kulikova} to $H$ with $k=m-1$. Write $K=K_{p,m}$
for the resulting quotient and let $D\leq Z(K)$ be the cyclic subgroup
of order $p^{m-1}$ supplied by that theorem. The group $K$ is non-trivial,
so \eqref{eq:no-sofic-quotient} makes it non-sofic. Since its proper
subgroups are cyclic, $K$ is minimal non-sofic. In particular, it is
infinite.

The quotient $K/D$ is infinite and is also a quotient of $H$, so it is
non-sofic. Let $D<U<K$. By Theorem~\ref{thm:Kulikova}, the group $U$ is
cyclic. Its order is a power of $p$ which is larger than $p^{m-1}$ and
at most $p^m$, and hence is exactly $p^m$. It follows that every
non-trivial proper subgroup of $K/D$ has order $p$. Thus $K/D$ is a
Tarski monster of exponent $p$ and is simple by
Lemma~\ref{lem:tarski-basic}.

Since this infinite simple group has trivial centre, the image of
$Z(K)$ in $K/D$ is trivial. Therefore $Z(K)\leq D$, and the reverse
inclusion is part of the construction. Consequently $Z(K)=D$. For
$m\geq2$, this is a non-trivial proper normal subgroup of $K$, while
$p^m$ is not prime.
\end{proof}

\begin{corollary}\label{cor:not-prime}
A periodic minimal non-sofic group need not have prime exponent, and
a minimal non-sofic group need not be simple.
\end{corollary}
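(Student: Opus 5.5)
The corollary follows by exhibiting one group, namely $K_{p,2}$ from Theorem~\ref{thm:main-primepowers}. Fix a prime $p>p_0$ and take $m=2$. Theorem~\ref{thm:main-primepowers} then supplies a minimal non-sofic group $K_{p,2}$ of exponent $p^2$. A group of finite exponent is periodic. The exponent $p^2$ is not prime, so $K_{p,2}$ is a periodic minimal non-sofic group without prime exponent, which is the first assertion.

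For the second assertion, I would use part (ii) of Theorem~\ref{thm:main-primepowers}: $Z(K_{p,2})$ is cyclic of order $p$, so it is non-trivial. It is also proper. If it were all of $K_{p,2}$, the group would be abelian, hence amenable and sofic \cite[Proposition II.3.1]{CapraroLupini}, which is impossible. Equivalently, the quotient $K_{p,2}/Z(K_{p,2})$ is an infinite Tarski monster by part (iii), so it is non-trivial. The centre is normal, so $K_{p,2}$ has a non-trivial proper normal subgroup and is not simple. This is exactly what the final paragraph of the proof of Theorem~\ref{thm:main-primepowers} records for $m\geq2$.

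There is no real obstacle here. The only point to state explicitly is that the exponent $p^m$ is attained exactly and is not merely an upper bound. This is guaranteed by the way the exponent is defined in Theorem~\ref{thm:main-primepowers} and by Theorem~\ref{thm:Kulikova}. The same choice of $m=2$ settles both assertions at once, and one could also note that a single group witnesses both failures simultaneously.
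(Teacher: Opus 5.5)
Your proposal is correct and matches the paper's proof, which simply takes $m=2$ in Theorem~\ref{thm:main-primepowers}. The facts you spell out are exactly those recorded in the final paragraph of that theorem's proof: the exponent $p^2$ is not prime, and $Z(K_{p,2})$ is a non-trivial proper normal subgroup.
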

\begin{proof}
Take $m=2$ in Theorem~\ref{thm:main-primepowers}.
\end{proof}

The first assertion answers \cite[Question 1.5]{Ersoy} negatively.
These examples do not answer \cite[Question 1.6]{Ersoy}, which asks
whether every periodic minimal non-sofic group has bounded exponent:
each individual group constructed in
Theorem~\ref{thm:main-primepowers} still has finite exponent.

\subsection{A consequence for periodic groups}

The prime-exponent examples make
\cite[Proposition 1.7 and Corollary 1.8]{Ersoy} unconditional.

\begin{corollary}\label{cor:divisible}
There exists a two-generated divisible periodic non-sofic group of
unbounded exponent.
\end{corollary}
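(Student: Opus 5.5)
The plan is to use Ivanov's embedding theorem for periodic groups, in the form used in \cite[Proposition 1.7]{Ersoy}: every countable periodic group $P$, in particular a countable group of bounded exponent, embeds into a two-generated divisible periodic group. Ivanov's construction (the embedding of countable groups into two-generated groups in a Burnside-type variety, together with a divisibility step) produces a two-generated periodic group in which every element has $n$-th roots for all $n$. I would take \cite[Proposition 1.7 and Corollary 1.8]{Ersoy} as the model. There, the existence of a non-sofic group of prime exponent was assumed, and the conclusion was exactly the statement of Corollary~\ref{cor:divisible}. The proof therefore amounts to discharging that hypothesis with Theorem~\ref{thm:main-existence}.

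First, fix a sufficiently large prime $p$ and take a non-sofic Tarski monster $T_p$ of exponent $p$ from Theorem~\ref{thm:main-existence}. It is countable, being two-generated by Lemma~\ref{lem:tarski-basic}, and periodic of exponent $p$. Second, apply Ivanov's embedding theorem, as invoked in \cite[Proposition 1.7]{Ersoy}, to obtain an embedding $T_p\hookrightarrow G$ into a two-generated divisible periodic group $G$. Third, $G$ is non-sofic because soficity passes to subgroups \cite[Proposition II.4.1(1)]{CapraroLupini} and $T_p\leq G$ is non-sofic.

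Finally, unbounded exponent follows from divisibility alone. Suppose $G$ had finite exponent $e$, and take $1\ne g\in G$ of order $q$ (for instance, an element of $T_p$, so $q=p$). Divisibility gives $h$ with $h^{e}=g$. But $h^{e}=1$, so $g=1$, a contradiction. Hence $G$ has unbounded exponent.

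The main obstacle is bookkeeping about exactly which version of Ivanov's theorem \cite[Proposition 1.7]{Ersoy} relies on. The point to check is whether it requires the embedded group to have odd prime exponent large enough for the Burnside-type machinery. If so, I would choose $p$ above both the threshold $n_0(H)$ from Theorem~\ref{thm:Semenov} and Ivanov's threshold. Theorem~\ref{thm:main-existence} provides non-sofic monsters for all sufficiently large primes, so both conditions can be met at once.
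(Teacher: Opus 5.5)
Your proposal is correct and follows the paper's proof: take a non-sofic Tarski monster $T_p$ from Theorem~\ref{thm:main-existence}, embed it by Ivanov's theorem into a two-generated divisible periodic group, get non-soficity because the group contains $T_p$, and rule out a finite exponent $e$ via $h^e=g\ne1$. On the point you flagged, the paper only checks that $T_p$ is countable and has no involutions before invoking \cite[Theorem 20, p.~288]{IvanovOlshanskii}, so your formulation for arbitrary countable periodic groups may claim more than the cited theorem gives, but you do not need it: taking $p$ an odd prime with $p\geq n_0(H)$ suffices.
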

\begin{proof}
Choose a sufficiently large odd prime $p$ and a non-sofic Tarski
monster $T_p$ as in Theorem~\ref{thm:main-existence}. It is countable
and has no involutions. Ivanov's embedding theorem
\cite[Theorem 20, p.~288]{IvanovOlshanskii} embeds $T_p$ in a
two-generated divisible periodic group $D$. Since $D$ contains the
non-sofic subgroup $T_p$, it is non-sofic.

If $D$ had finite exponent $e$, divisibility would give, for every
$g\in D$, an element $h\in D$ with $h^e=g$. This would force $g=1$,
a contradiction. Thus $D$ has unbounded exponent. In particular,
$T_p<D$, so this $D$ is not minimal non-sofic.
\end{proof}

\section{Subgroup structure of other non-sofic groups}
\label{sec:other-examples}

In \cite[Section 1 and Definition 1.4]{Ersoy}, we studied
$\omega$-non-sofic groups, that is, groups with infinite strictly descending
chains of non-sofic subgroups. This leads us to ask which of the newly
constructed non-sofic groups have this property. We prove
$\omega$-non-soficity for several of the families below, using proper
self-embeddings or local gradedness and \cite[Theorem 1.10]{Ersoy}.
We also indicate the cases left open. The non-soficity results used
in this section are taken from the cited papers.

\subsection{The binary Leavitt unit group}
\label{sec:leavitt}

Let $R=L_{\F_2}(1,2)$, with presentation
\begin{equation}\label{eq:leavitt-relations}
 R=\F_2\langle s_0,s_1,t_0,t_1\mid
 t_i s_j=\delta_{ij}1,\quad s_0t_0+s_1t_1=1\rangle;
\end{equation}
see \cite[Definition 1.1.3]{AbramsAraSilesMolina}. By
\cite[Proposition 1.3.2]{AbramsAraSilesMolina}, this is the Leavitt
path algebra of the graph with one vertex and two loops. The $s_i$
are the loops and the $t_i$ their ghost edges; the vertex is the
identity, and \eqref{eq:leavitt-relations} gives (CK1)--(CK2)
\cite[Definition 1.2.3 and Lemma 1.2.12(iv)]{AbramsAraSilesMolina}.
The vertex is non-zero by
\cite[Corollary 1.5.13]{AbramsAraSilesMolina}.

The group $R^\times$ is non-sofic
\cite[Chapter 3, Theorem 1.1]{OpenAI}. Moreover,
$R^\times\cong\EL_9(R)$ by \cite[Corollary 4.4]{KhanhThanh}.
We give proper self-embeddings in both descriptions.

Put $e=s_0t_0$ and $f=s_1t_1$. The defining relations give
\begin{equation}\label{eq:idempotents}
 e+f=1,\qquad e^2=e,\qquad f^2=f,\qquad ef=fe=0.
\end{equation}
Both idempotents are non-zero, since $t_0es_0=t_1fs_1=1$; see also
\cite[Remark 1.3.3 and Example 3.1.3]{AbramsAraSilesMolina}.
The corner algebra $eRe$ has identity $e$, not $1$. The corresponding
module decomposition is $R=Re\oplus Rf$, with $Re\cong R$
\cite[Proposition 3.0.1 and Example 3.1.3]{AbramsAraSilesMolina}.
The ring isomorphism needed below is explicit.

\begin{lemma}\label{lem:leavitt-corner}
The map $c:R\to eRe$ given by $c(r)=s_0rt_0$ is a ring isomorphism
with inverse $a\mapsto t_0as_0$. It sends $1$ to $e$.
\end{lemma}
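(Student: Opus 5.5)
The plan is to verify everything directly from the defining relations \eqref{eq:leavitt-relations}, chiefly $t_0s_0=1$ and $s_0t_0=e$. I will show that $c$ takes values in $eRe$, that it is a unital ring homomorphism onto the corner ring (whose identity is $e$), and that the map $d:eRe\to R$, $d(a)=t_0as_0$, is a two-sided inverse.

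First, $c$ lands in $eRe$. For $r\in R$ we have $e\,s_0rt_0\,e=s_0t_0s_0\,r\,t_0s_0t_0=s_0rt_0$, since $t_0s_0=1$. Hence $c(r)\in eRe$. Next, $c$ is clearly additive, and it is $\F_2$-linear. For multiplicativity,
\[
c(r)c(r')=s_0rt_0s_0r't_0=s_0rr't_0=c(rr'),
\]
again using $t_0s_0=1$. Finally, $c(1)=s_0t_0=e$, which is the identity of $eRe$. So $c$ is a homomorphism of unital rings $R\to eRe$.

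For the inverse, I compose both ways. For any $r\in R$,
\[
d(c(r))=t_0s_0rt_0s_0=r.
\]
For $a\in eRe$ we have $a=eae$, so
\[
c(d(a))=s_0t_0as_0t_0=eae=a.
\]
Thus $c$ and $d$ are mutually inverse bijections. Since $c$ is a ring homomorphism, $d$ is one too, so $c$ is a ring isomorphism $R\to eRe$ with inverse $a\mapsto t_0as_0$, and $c(1)=e$.

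There is no real obstacle here. The only point needing care is that $eRe$ is a ring with identity $e$ rather than $1$, so "unital" must be read relative to $e$. Non-triviality of $e$, noted before the lemma, ensures that $eRe$ is a genuine non-zero ring, but the argument does not depend on it.
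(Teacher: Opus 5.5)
Your proposal is correct and follows essentially the same route as the paper: direct verification from $t_0s_0=1$ and $s_0t_0=e$ that $c$ is additive and multiplicative, lands in $eRe$, and satisfies $t_0c(r)s_0=r$ and $c(t_0as_0)=eae=a$. You also write out the check $e\,c(r)\,e=c(r)$ and the identity $c(1)=e$ explicitly, which the paper leaves implicit.
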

\begin{proof}
The map is additive, its image lies in $eRe$, and
\[
 c(r)c(s)=s_0r(t_0s_0)st_0=c(rs).
\]
For $r\in R$ and $a\in eRe$, we have
$t_0c(r)s_0=r$ and $c(t_0as_0)=eae=a$.
\end{proof}

If $a,b\in eRe$ satisfy $ab=ba=e$, then
\[
 (a+f)(b+f)=(b+f)(a+f)=1.
\]
Also $(a+f)(a'+f)=aa'+f$ for $a,a'\in(eRe)^\times$. Hence
$a\mapsto a+f$ embeds $(eRe)^\times$ in $R^\times$, as in
\cite[Chapter 3, Section 3.1]{OpenAI}. Composing with $c$ gives the
following map.

\begin{proposition}\label{prop:unit-embedding}
The map
\[
 \varphi:R^\times\longrightarrow R^\times,\qquad
 \varphi(u)=s_0ut_0+f
\]
is a proper self-embedding. Consequently $R^\times$ is
$\omega$-non-sofic.
\end{proposition}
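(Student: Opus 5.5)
The plan is to factor $\varphi$ as the composite of the ring isomorphism $c$ from Lemma~\ref{lem:leavitt-corner}, restricted to units, with the embedding $a\mapsto a+f$ of $(eRe)^\times$ into $R^\times$ recorded just before the statement. Since $c$ is a ring isomorphism $R\to eRe$ sending $1$ to $e$, it maps $R^\times$ isomorphically onto $(eRe)^\times$, the unit group of the corner ring with identity $e$. The displayed identities $(a+f)(b+f)=1$ and $(a+f)(a'+f)=aa'+f$ show that $a\mapsto a+f$ is a group homomorphism $(eRe)^\times\to R^\times$. It is injective because $a=e(a+f)e$, using $ef=fe=0$ from \eqref{eq:idempotents}. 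Hence $\varphi(u)=c(u)+f$ is an injective homomorphism $R^\times\to R^\times$.

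The main step is to show that the image is proper. Every element of $\varphi(R^\times)$ has the form $a+f$ with $a\in eRe$, so it commutes with $e$ and satisfies $fxf=f$ and $exf=fxe=0$. It therefore suffices to exhibit a unit of $R$ that does not commute with $e$.

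I would first try $x=s_1t_0+s_0t_1$, which swaps the two summands. Using $t_is_j=\delta_{ij}$, one computes $x^2=s_1t_1+s_0t_0=1$, so $x\in R^\times$. Next, $xe=s_1t_0$ and $ex=s_0t_1$. These differ: multiplying on the left by $t_1$ and on the right by $s_0$ gives $t_1(s_1t_0)s_0=1$ but $t_1(s_0t_1)s_0=0$, and $1\neq0$ because the vertex is non-zero. Thus $x\notin\varphi(R^\times)$, so the image is proper.

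Finally, $R^\times$ is non-sofic by \cite[Chapter 3, Theorem 1.1]{OpenAI}, and Lemma~\ref{lem:self-embedding} then gives that $R^\times$ is $\omega$-non-sofic. The only possible obstacle is the properness witness, and the explicit involution above settles it.
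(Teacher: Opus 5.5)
Your proposal is correct and follows the paper's argument in substance: the homomorphism comes from the corner isomorphism $c$ followed by $a\mapsto a+f$, injectivity comes from compressing back (you use $e(a+f)e=a$, the paper uses $t_0\varphi(u)s_0=u$), and properness comes from an explicit unit violating a corner condition that every element of the image satisfies. The only difference is the witness: the paper takes $w=1+s_1t_0$ and the condition $f\varphi(u)e=0$, using characteristic two for $w^2=1$, whereas your swap involution $s_1t_0+s_0t_1$ fails to commute with $e$ and needs no assumption on the characteristic.
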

\begin{proof}
The preceding identities show that $\varphi$ is a homomorphism, with
$\varphi(u)^{-1}=s_0u^{-1}t_0+f$. It is injective since
$t_0\varphi(u)s_0=u$.

Put $w=1+s_1t_0$. Since $(s_1t_0)^2=0$ and $R$ has characteristic
two, $w^2=1$. Every element in the image satisfies
$f\varphi(u)e=0$, whereas
\[
 fwe=s_1t_0\ne0,
\]
as $t_1(s_1t_0)s_0=1$. Thus $w\notin\varphi(R^\times)$.
Now apply Lemma~\ref{lem:self-embedding}.
\end{proof}

For comparison with the matrix-ring description in
\cite[Section 7.2]{AbramsAraSilesMolina} and
\cite[Proposition 2.4]{KhanhThanh}, the maps
\[
 r\longmapsto(t_i r s_j)_{i,j=0,1},\qquad
 (a_{ij})\longmapsto\sum_{i,j=0}^1s_i a_{ij}t_j
\]
are inverse ring isomorphisms $R\cong M_2(R)$, by
\eqref{eq:leavitt-relations}. Under this isomorphism,
\[
 \varphi(u)\longmapsto\begin{pmatrix}u&0\\0&1\end{pmatrix},
 \qquad w\longmapsto\begin{pmatrix}1&0\\1&1\end{pmatrix}.
\]
Thus $\varphi$ is a group homomorphism on units, not a ring
homomorphism on $R$: the same formula sends $0$ to $f$.

\Needspace{8\baselineskip}
Write $\EL_9(R)$ for the subgroup of $\GL_9(R)$ generated by
$I_9+rE_{ij}$ with $r\in R$ and $i\ne j$
\cite[Section 3]{KhanhThanh}.

\begin{proposition}\label{prop:matrix-embedding}
The map
\[
 \Phi:\EL_9(R)\longrightarrow\EL_9(R),\qquad
 \Phi((a_{ij}))=(c(a_{ij})+\delta_{ij}f)_{i,j=1}^9
\]
is a proper self-embedding. Consequently $\EL_9(R)$ is
$\omega$-non-sofic.
\end{proposition}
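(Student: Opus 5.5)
The map $\Phi$ is entrywise application of the non-unital ring embedding $c$, with $f$ added on the diagonal. I would prove it is a homomorphism, that it is injective, that it lands in $\EL_9(R)$, and that its image is proper. Non-soficity of $\EL_9(R)\cong R^\times$ then comes from \cite[Chapter 3, Theorem 1.1]{OpenAI} and \cite[Corollary 4.4]{KhanhThanh}, and Lemma~\ref{lem:self-embedding} gives $\omega$-non-soficity.

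\emph{Homomorphism and injectivity.} Write $\Phi(A)=c(A)+fI_9$, where $c$ is applied entrywise. Since $c$ is additive and multiplicative, $c(A)c(B)=c(AB)$. Since $c(a)\in eRe$ and $ef=fe=0$, we get $c(A)f=fc(A)=0$. Hence
\[
\Phi(A)\Phi(B)=c(AB)+f^2I_9=\Phi(AB).
\]
Also $\Phi(I_9)=eI_9+fI_9=I_9$. For injectivity, $t_0(c(a)+\delta f)s_0=a$ because $t_0fs_0=t_0s_1t_1s_0=0$. So the entries of $A$ are recovered from those of $\Phi(A)$.

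\emph{Image in $\EL_9(R)$.} On a generator,
\[
\Phi(I_9+rE_{ij})=I_9+c(r)E_{ij},
\]
which is again an elementary matrix with $c(r)\in R$. Since $\Phi$ is multiplicative, it therefore maps $\EL_9(R)$ into itself.

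\emph{Properness.} This is the main point. Every $\Phi(A)$ satisfies $f\,\Phi(A)_{ij}\,e=0$ for all entries, since $fc(a)e=0$ and $f\cdot f\cdot e=0$. I would exhibit an element of $\EL_9(R)$ violating this. The simplest choice is $I_9+s_1t_0E_{12}$: its $(1,2)$ entry $s_1t_0$ satisfies $f s_1t_0 e=s_1t_0\neq0$, as in the proof of Proposition~\ref{prop:unit-embedding}. It is elementary by definition, so no appeal to the isomorphism with $R^\times$ is needed for properness.
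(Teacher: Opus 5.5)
Your proof is correct and follows essentially the same route as the paper: the homomorphism property comes from multiplicativity of $c$ and $c(r)f=fc(r)=0$, injectivity from recovering the entries via $t_0(\cdot)s_0$, and $\Phi$ preserves $\EL_9(R)$ because it sends elementary generators to elementary generators. The only difference is the properness witness. The paper observes that off-diagonal entries of the image lie in $eRe$ and uses $I_9+s_1E_{12}$, since $es_1=0\neq s_1$. You instead use the invariant $fxe=0$ with $I_9+s_1t_0E_{12}$, mirroring the argument for $\varphi$, and both choices work.
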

\begin{proof}
Since $c$ is additive and multiplicative and $c(r)f=fc(r)=0$,
\[
 (\Phi(A)\Phi(B))_{ij}=c((AB)_{ij})+\delta_{ij}f.
\]
Also $\Phi(I_9)=I_9$ and
$\Phi(I_9+rE_{ij})=I_9+c(r)E_{ij}$ for $i\ne j$.
Thus $\Phi$ is a homomorphism of the elementary group. It is
injective because $t_0\Phi(A)_{ij}s_0=a_{ij}$.

Every off-diagonal entry in its image belongs to $eRe$. But
$s_1\notin eRe$, since $es_1=0$ and $s_1\ne0$. Hence
$I_9+s_1E_{12}$ is not in the image. Non-soficity follows from
$\EL_9(R)\cong R^\times$, and Lemma~\ref{lem:self-embedding} applies.
\end{proof}

\subsection{Fournier-Facio's torsion-free construction}
\label{sec:ff}

Fournier-Facio constructs a finitely presented torsion-free non-sofic
group $Q$ with property~$(T)$
\cite[Theorem 1.3 and Section 2]{FournierFacio}.
His construction starts with a finitely presented torsion-free group
$U$ which contains every finitely presented torsion-free group.
It embeds $U$ in a finitely presented torsion-free group $P$ with
property~$(T)$. We first record a consequence for these two groups.

\begin{proposition}\label{prop:universal-tf}
Every finitely presented torsion-free group $U$ which contains all
finitely presented torsion-free groups is non-sofic and has a proper
self-embedding. In particular, the groups $U$ and $P$ above are
$\omega$-non-sofic and are not minimal non-sofic.
\end{proposition}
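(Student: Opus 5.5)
Two ingredients are needed: non-soficity of $U$, and a proper self-embedding $U\to U$. Lemma~\ref{lem:self-embedding} then makes $U$ $\omega$-non-sofic. Minimal non-soficity is excluded by Lemma~\ref{lem:minimal-restrictions}, since minimal non-sofic groups are co-Hopfian. For $P$, which contains $U$, the remark after Lemma~\ref{lem:self-embedding} says $\omega$-non-soficity passes to overgroups. Also $P$ is non-sofic and has the proper non-sofic subgroup $U$; note $U\neq P$, since $U$ is universal while $P$ is torsion-free with property~$(T)$, and in any case $U<P$ or equality is irrelevant because $U$ has a proper non-sofic subgroup $\varphi(U)$. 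So $P$ is not minimal non-sofic either.

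For non-soficity, I would use a finitely presented torsion-free non-sofic group and embed it in $U$. The Sandhu--Thom group $H$ of Theorem~\ref{thm:ST} is torsion-free hyperbolic, hence finitely presented, and it is non-sofic. By universality $H$ embeds in $U$. Since soficity passes to subgroups, $U$ is non-sofic.

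For the proper self-embedding, I would use universality applied to a bigger finitely presented torsion-free group. Take $U*\Z$. It is finitely presented, and it is torsion-free because torsion elements in a free product are conjugate into a factor. So there is an injective homomorphism $\psi:U*\Z\to U$. Let $\varphi=\psi|_U$, which is an injective endomorphism of $U$.

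The main obstacle is showing the image $\varphi(U)$ is proper. The natural witness is $\psi(t)$, where $t$ generates $\Z$. This element lies in $U$ but not in $\psi(U)$, because $\psi$ is injective and $t\notin U$ inside $U*\Z$. Hence $\varphi(U)=\psi(U)<U$. Compared with that, the torsion-freeness and finite presentability of $U*\Z$ are routine, and I expect no further difficulty.
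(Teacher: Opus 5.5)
Your proof is correct, but it differs from the paper's in three places.

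\begin{itemize}
\item \emph{Non-soficity.} The paper cites Fournier-Facio's Theorem~1.3 for a finitely presented torsion-free non-sofic group. You use the Sandhu--Thom group $H$ of Theorem~\ref{thm:ST}, which is finitely presented because it is hyperbolic. This is equally valid and keeps the argument inside the paper.
\item \emph{Self-embedding.} The paper embeds $U\times U$ rather than $U*\Z$. Torsion-freeness of a direct product is immediate, whereas you need the standard fact that torsion in a free product is conjugate into a factor. On the other hand, your witness $\psi(t)$ needs no assumption on $U$. The paper's witness $j(1,v)$ needs $U\neq1$, which holds because $U$ is non-sofic. Using $U\times\Z$ would combine both advantages.
\item \emph{The group $P$.} The paper observes that $P$ is itself finitely presented, torsion-free and contains $U$, hence is universal, and reruns the argument to get a proper self-embedding of $P$. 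You instead use $\varphi(U)<U\leq P$ together with the overgroup remark. This gives exactly the claimed $\omega$-non-soficity and non-minimality of $P$, though not a self-embedding of $P$.
\end{itemize}

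You should delete one sentence: your justification of $U\neq P$ is invalid. Universality and property~$(T)$ are compatible; $P$ itself is universal with property~$(T)$. As you note, the point is irrelevant to the argument.
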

\begin{proof}
By \cite[Theorem 1.3]{FournierFacio}, there is a finitely presented
torsion-free non-sofic group. Universality embeds it in $U$, so $U$
is non-sofic by \cite[Proposition II.4.1(1)]{CapraroLupini}.

The group $U\times U$ is torsion-free and finitely presented: finite
presentations of the two factors, together with the finitely many
commutators between their generators, give a finite presentation.
Universality provides an embedding $j:U\times U\hookrightarrow U$.
The map $u\mapsto j(u,1)$ is injective and has proper image. Indeed,
for $v\ne1$, injectivity of $j$ gives $j(1,v)\notin j(U\times\{1\})$.
Apply Lemma~\ref{lem:self-embedding}.
The same argument applies to $P$, which is finitely presented,
torsion-free and contains $U$, and is therefore universal as well.
\end{proof}

For the final quotient $Q$, we use the notation of
\cite[Section 2]{FournierFacio}. There are subgroups
$P_1\times P_2\times S\leq P$, with $P_i\cong P$ and $S$ infinite
simple. Let $E$ be the double HNN extension with stable letters
$u_1,u_2$ satisfying $u_iPu_i^{-1}=P_i$.
The last step takes a quotient $\pi:E\twoheadrightarrow Q$ which is
injective on $S$. Put
\[
 \Gamma=\pi(P),\qquad S_0=\pi(S),\qquad t_i=\pi(u_i).
\]
Then $S_0\leq\Gamma$, $S_0\cong S$, and
\[
 t_1\Gamma t_1^{-1}=\pi(P_1)\leq\Gamma,
 \qquad [\pi(P_1),S_0]=1.
\]

\begin{proposition}\label{prop:ff-compression}
In this construction, $\Gamma<Q$ and conjugation by $t_1$ is a proper
self-embedding of $\Gamma$. If $\Gamma$ is non-sofic, then $Q$ is
$\omega$-non-sofic.
\end{proposition}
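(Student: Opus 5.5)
Conjugation by $t_1$ is an injective endomorphism of $Q$, so its restriction $\gamma\mapsto t_1\gamma t_1^{-1}$ to $\Gamma$ is injective. By the displayed relation its image is $\pi(P_1)\leq\Gamma$, so it is a self-embedding of $\Gamma$. Two things remain: this image is a proper subgroup of $\Gamma$, and $\Gamma\neq Q$. Once both hold, Lemma~\ref{lem:self-embedding} makes a non-sofic $\Gamma$ into an $\omega$-non-sofic group. The remark after Lemma~\ref{lem:self-embedding}, that $\omega$-non-soficity passes to overgroups, then gives the same for $Q$, by prepending $Q$ to the chain in $\Gamma$.

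For properness of $\pi(P_1)$ in $\Gamma$, I will use the subgroup $S_0$. It lies in $\Gamma$, it is isomorphic to the infinite simple group $S$, and it commutes with $\pi(P_1)$. Suppose $\pi(P_1)=\Gamma$. Then $S_0\leq\Gamma=\pi(P_1)$, and since $[\pi(P_1),S_0]=1$, the group $S_0$ commutes with itself. So $S_0$ would be abelian. But $S_0$ is non-abelian, because it is infinite simple: an abelian simple group is cyclic of prime order, hence finite. This contradiction shows that $\pi(P_1)$ is a proper subgroup of $\Gamma$.

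For $\Gamma<Q$, I will argue that the stable letter $t_1$ cannot lie in $\Gamma$. Suppose $t_1\in\Gamma$. Then conjugation by $t_1$ maps $\Gamma$ onto $\Gamma$ itself, since $t_1\Gamma t_1^{-1}=\Gamma$ for any element of $\Gamma$. But we just showed that $t_1\Gamma t_1^{-1}=\pi(P_1)$ is a proper subgroup of $\Gamma$, so this is impossible. Hence $t_1\notin\Gamma$, and $\Gamma$ is a proper subgroup of $Q$.

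I expect the main obstacle to be bookkeeping rather than mathematics. I need to confirm from \cite[Section 2]{FournierFacio} that the displayed relations really hold in $Q$: the equality $t_1\Gamma t_1^{-1}=\pi(P_1)$ comes directly from applying $\pi$ to $u_1Pu_1^{-1}=P_1$, and the commuting relation comes from the direct product $P_1\times P_2\times S\leq P$. I also need the injectivity of $\pi$ on $S$, which is stated in the construction; without it $S_0$ might be trivial, and the properness argument above would fail. The conditional hypothesis that $\Gamma$ is non-sofic is simply carried along and is not proved here.
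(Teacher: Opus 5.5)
Your proposal is correct and follows the paper's proof essentially step for step. The paper also rules out $\pi(P_1)=\Gamma$ because it would force $S_0\leq Z(\Gamma)$, contradicting that $S_0$ is non-abelian, and it deduces $t_1\notin\Gamma$ from the strict inclusion $t_1\Gamma t_1^{-1}<\Gamma$; the only cosmetic difference is that it writes the chain $Q>\Gamma>t_1\Gamma t_1^{-1}>t_1^2\Gamma t_1^{-2}>\cdots$ explicitly instead of citing Lemma~\ref{lem:self-embedding} and the overgroup remark.
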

\begin{proof}
If $\pi(P_1)=\Gamma$, then $S_0\leq Z(\Gamma)$, which is impossible
because $S_0$ is non-abelian. Thus $t_1\Gamma t_1^{-1}<\Gamma$.
Conjugation is injective, so it is a proper self-embedding of $\Gamma$.
Also $t_1\notin\Gamma$, since conjugation by an element of $\Gamma$
maps $\Gamma$ onto itself. Hence $\Gamma<Q$.
If $\Gamma$ is non-sofic, the chain
\[
 Q>\Gamma>t_1\Gamma t_1^{-1}>t_1^2\Gamma t_1^{-2}>\cdots
\]
has the required property.
\end{proof}

The construction requires $\pi$ to be injective on $S$, but not on $P$.
Thus the non-soficity of $P$ does not establish the hypothesis on
$\Gamma$ in Proposition~\ref{prop:ff-compression}.

\subsection{The Kun--Thom wreath products}
\label{sec:kt}

Following \cite[Section 1]{KunThom}, a subgroup $\Gamma\leq G$ is
\emph{infranormal} if the set
\[
 P_\Gamma=\{g\in G:g\Gamma g^{-1}\leq\Gamma\}
\]
generates $G$. Kun and Thom prove that
\[
 \left(\bigoplus_{G/\Gamma}\Ctwo\right)\rtimes G
 \quad\hbox{and}\quad G*_{\Gamma}G
\]
are non-sofic whenever $\Gamma$ is infranormal but not normal in $G$
and both $\Gamma$ and $G$ have property~$(T)$
\cite[Theorem A]{KunThom}.

For the subgroup arguments, we use their explicit residually finite
pairs \cite[Theorem E]{KunThom}. Fix a prime power $q$ and integers
$r,d\geq3$, and put
\[
 R_+=\F_q[x_1,\ldots,x_d],\qquad
 R=\F_q[x_1^{\pm1},\ldots,x_d^{\pm1}],
\]
\begin{equation}\label{eq:kt-pair}
 \Gamma=\EL_r(R_+),\qquad G=\EL_r(R)\rtimes\SL_d(\Z),
\end{equation}
where $\SL_d(\Z)$ acts by monomial substitutions. The cited theorem
states that both groups are residually finite and have property~$(T)$,
and that $\Gamma$ is infranormal but not normal in $G$.
Let $B=\bigoplus_{G/\Gamma}\Ctwo$ and $\Lambda=B\rtimes G$.

\begin{proposition}\label{prop:kt-locally-graded}
For the pair in \eqref{eq:kt-pair}, $\Lambda$ is locally graded and
non-sofic. It is therefore $\omega$-non-sofic and contains no minimal
non-sofic subgroup.
\end{proposition}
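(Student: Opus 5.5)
Non-soficity of $\Lambda$ is exactly \cite[Theorem A]{KunThom}. It applies because \cite[Theorem E]{KunThom} guarantees that the pair in \eqref{eq:kt-pair} satisfies all the hypotheses: both groups have property~$(T)$, and $\Gamma$ is infranormal but not normal in $G$. The real content is therefore local gradedness. I plan to apply Lemma~\ref{lem:lf-by-rf} with $L=\Lambda$ and $N=B$.

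First, $B=\bigoplus_{G/\Gamma}\Ctwo$ is a normal subgroup of $\Lambda$, being the base of the semidirect product. It is an elementary abelian $2$-group, so every finitely generated subgroup of $B$ is a finite-dimensional $\F_2$-vector space and hence finite. Thus $B$ is locally finite. Second, $\Lambda/B\cong G$, and $G$ is residually finite by \cite[Theorem E]{KunThom}. Lemma~\ref{lem:lf-by-rf} then gives that $\Lambda$ is locally graded.

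The remaining conclusions are formal. Lemma~\ref{lem:graded} shows that a locally graded group contains no minimal non-sofic subgroup. For $\omega$-non-soficity I will invoke \cite[Theorem 1.10]{Ersoy}, which, as announced in the introduction, yields $\omega$-non-soficity for locally graded non-sofic groups. If one prefers a self-contained chain, Proposition~\ref{prop:rf-chain} applies directly to $\pi:\Lambda\twoheadrightarrow G$. This needs $G$ to be infinite and residually finite, and $G$ is infinite since it contains $\SL_d(\Z)$. The proposition then gives a strictly descending chain of finite-index normal non-sofic subgroups of $\Lambda$ intersecting in $B$.

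I expect no serious obstacle. The only points needing care are checking that the hypotheses of Kun--Thom's Theorem A are exactly those supplied by their Theorem E for this pair, and that $\Lambda/B$ is identified with $G$ itself rather than some quotient of it. The latter holds because the semidirect product splits, with $G$ embedded as a complement to $B$.
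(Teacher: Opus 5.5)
Your proposal is correct and follows the paper's proof: you show $B$ is locally finite, use $\Lambda/B\cong G$ with $G$ residually finite by \cite[Theorem E]{KunThom}, apply Lemma~\ref{lem:lf-by-rf}, take non-soficity from \cite[Theorems A and E]{KunThom}, and then invoke \cite[Theorem 1.10]{Ersoy} together with Lemma~\ref{lem:graded}. Your alternative chain, obtained by applying Proposition~\ref{prop:rf-chain} to $\Lambda\twoheadrightarrow G$, is also valid; it is the route the paper uses for the doubles, and it additionally gives finite-index normal non-sofic subgroups whose intersection is $B$.
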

\begin{proof}
A finitely generated subgroup of $B$ is supported on finitely many
coordinates, and is therefore finite. Thus $B$ is locally finite.
The quotient $\Lambda/B\cong G$ is residually finite by
\cite[Theorem E]{KunThom}. Hence Lemma~\ref{lem:lf-by-rf} shows that
$\Lambda$ is locally graded. It is non-sofic by
\cite[Theorems A and E]{KunThom}. Now
\cite[Theorem 1.10]{Ersoy} gives $\omega$-non-soficity. Since local
gradedness passes to subgroups and minimal non-sofic groups are not
locally graded \cite[Remark 1.11]{Ersoy}, $\Lambda$ contains no minimal
non-sofic subgroup.
\end{proof}

In \cite[Question 1.9]{Ersoy}, we asked whether every locally graded
group is sofic. Proposition~\ref{prop:kt-locally-graded} answers this
question negatively: $\Lambda$ is locally graded but non-sofic.
The residual finiteness used here belongs to the explicit pairs in
\cite[Theorem E]{KunThom}; it is not a hypothesis of their general
Theorem A.

\subsection{The Kun--Thom amalgamated doubles}\label{sec:doubles}

Use the pair $(\Gamma,G)$ in \eqref{eq:kt-pair}, and put
$\Delta=G*_{\Gamma}G$, with the same inclusion in both vertex groups.
We first describe the kernel of the folding retraction. The argument
uses the tree of an amalgam from
\cite[Chapter I, Section 4.1, Theorem 7]{Serre} and the free-action
criterion \cite[Chapter I, Section 3.3, Theorem 4]{Serre}.
The rank calculation below is an application of the free-basis
description in \cite[Chapter I, Section 3.3, Theorem $4'$]{Serre}.

\begin{lemma}\label{lem:folding}
Let $\Gamma<G$ be countable, let $\Delta=G*_{\Gamma}G$, and let
$\rho:\Delta\to G$ restrict to the identity on both vertex groups.
Then $K=\ker\rho$ is free. If $[G:\Gamma]$ is infinite, $K$ is a free
group of countably infinite rank.
\end{lemma}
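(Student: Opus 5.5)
We work with the Bass--Serre tree $X$ of the amalgam $\Delta=G*_{\Gamma}G$ \cite[Chapter I, Section 4.1, Theorem 7]{Serre}. Write $G_1,G_2$ for the two vertex groups, both identified with $G$. The vertices of $X$ are the cosets $\delta G_1$ and $\delta G_2$, and the edges are the cosets $\delta\Gamma$. The edge $\delta\Gamma$ joins $\delta G_1$ to $\delta G_2$. The stabilizers of vertices and edges are the conjugates of $G_1$, $G_2$ and $\Gamma$. The proof has two steps: show that $K$ acts freely on $X$, then count the edges of the quotient graph.

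\textbf{Freeness.} The kernel $K$ is normal, so it meets the stabilizer $\delta G_i\delta^{-1}$ in $\delta(K\cap G_i)\delta^{-1}$. Since $\rho$ restricts to the identity on $G_i$, we have $K\cap G_i=1$. Hence $K$ acts on $X$ with trivial vertex stabilizers, and therefore without inversions. The free-action criterion \cite[Chapter I, Section 3.3, Theorem 4]{Serre} now shows that $K$ is free.

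\textbf{Rank.} I would use \cite[Theorem $4'$]{Serre}: $K$ is free on a basis in bijection with the edges of $K\backslash X$ outside a maximal tree. The map $\rho$ is surjective, so $\Delta/K\cong G$, and $K\backslash X$ is the graph with the following data.
\begin{itemize}
\item Vertex set $\bigl(G/G\bigr)\times\{1,2\}$, identified with $G\times\{1,2\}$ after quotienting the double cosets $K\backslash\Delta/G_i$. This uses $K G_i=\Delta$ on cosets; more precisely, $K\backslash\Delta/G_i\cong \rho(\Delta)/G=$ a single point. So there are exactly two vertices, $v_1$ and $v_2$.
\item Edge set $K\backslash\Delta/\Gamma\cong G/\Gamma$. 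This holds because $\Delta/K\cong G$ acts transitively and the double coset $K\delta\Gamma$ corresponds to $\rho(\delta)\Gamma$.
\end{itemize}
Thus $K\backslash X$ has two vertices joined by $[G:\Gamma]$ edges. A maximal tree consists of one edge, so $K$ is free of rank $[G:\Gamma]-1$. When the index is infinite, this is countably infinite, because $G$ is countable.

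\textbf{Main obstacle.} The key step is the identification of the double cosets $K\backslash\Delta/G_i$ and $K\backslash\Delta/\Gamma$ through $\rho$. For the first, take $\delta\in\Delta$. Then $\delta\,\rho(\delta)^{-1}\in K$, where $\rho(\delta)$ is viewed in $G_i$, so $\delta\in KG_i$. For the second, the map $K\delta\Gamma\mapsto\rho(\delta)\Gamma$ is well defined because $\rho(K)=1$. It is surjective since $\rho|_{G_1}$ is the identity. It is injective because $\rho(\delta)\Gamma=\rho(\delta')\Gamma$ gives $\rho(\delta)=\rho(\delta')\gamma$ for some $\gamma\in\Gamma$, so $\rho(\delta'\gamma)=\rho(\delta)$, and hence $\delta\in K\delta'\gamma$ by normality of $K$. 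Only the countability and index hypotheses remain to be checked, and I expect them to be routine.
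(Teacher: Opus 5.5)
Your proposal is correct and follows the paper's proof essentially step for step. You show freeness from $K\cap\delta G_i\delta^{-1}=1$ and Serre's Theorem 4, then use $\rho$ to identify the quotient graph as two vertices joined by edges indexed by $G/\Gamma$, and conclude by Theorem $4'$ with a one-edge maximal tree.

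Two small corrections. First, trivial vertex stabilizers do not by themselves rule out inversions; the reason there are none is that $\Delta$ preserves the two vertex types of its Bass--Serre tree, and hence so does $K$. Second, the phrase ``identified with $G\times\{1,2\}$'' should say that the vertex set is $\{v_1,v_2\}$, which is what you correctly conclude immediately afterwards.
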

\begin{proof}
The restrictions of $\rho$ to both copies of $G$ are injective. Since
$K$ is normal, its intersection with every conjugate of either vertex
group is trivial. Thus $K$ acts freely on the Bass--Serre tree and is
free by \cite[Chapter I, Section 3.3, Theorem 4]{Serre}.

More explicitly, the quotient graph of this tree by $K$ has two
vertices, one of each type, because $\rho$ maps each vertex group
onto $G$. Its edges are indexed by $G/\Gamma$, and every edge joins
the two vertices. Indeed, vertex orbits are the double cosets
$K\backslash\Delta/G_i$ for the two vertex groups $G_i$, and edge
orbits are $K\backslash\Delta/\Gamma$; applying $\rho$ identifies
these sets with singletons and $G/\Gamma$, respectively. Choosing
one edge as a spanning tree gives a free basis indexed by all the
remaining edges, as in
\cite[Chapter I, Section 3.3, Theorem $4'$]{Serre}. Infinite index therefore
gives countably infinite rank.
\end{proof}

\begin{theorem}\label{thm:double-chain}
For the pair $(\Gamma,G)$ in \eqref{eq:kt-pair}, the double
$\Delta=G*_{\Gamma}G$ is locally graded and non-sofic. It has a strictly
descending chain
\[
 \Delta=\Delta_0>\Delta_1>\Delta_2>\cdots
\]
of finite-index normal locally graded non-sofic subgroups whose
intersection is the kernel of the folding retraction. This kernel is
a free group of countably infinite rank. In particular, $\Delta$ is
$\omega$-non-sofic and is not minimal non-sofic.
\end{theorem}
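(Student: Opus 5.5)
The plan is to combine the folding retraction $\rho:\Delta\to G$ with Proposition~\ref{prop:rf-chain}, Lemma~\ref{lem:folding} and the extension closure of local gradedness in Lemma~\ref{lem:graded}.

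\emph{Non-soficity.} By \cite[Theorems A and E]{KunThom}, $\Delta$ is non-sofic: $\Gamma$ is infranormal but not normal in $G$, and both $\Gamma$ and $G$ have property~$(T)$.

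\emph{Local gradedness.} The folding map $\rho$ is surjective, since it is the identity on each vertex group. So there is an extension $1\to K\to\Delta\to G\to1$ with $K=\ker\rho$. By Lemma~\ref{lem:folding}, $K$ is free. Free groups are residually finite, hence locally graded by Lemma~\ref{lem:graded}. The quotient $G$ is residually finite by \cite[Theorem E]{KunThom}, so it is also locally graded. Closure of local gradedness under extensions (Lemma~\ref{lem:graded}) then shows that $\Delta$ is locally graded. Subgroups inherit local gradedness, so every $\Delta_i$ constructed below is locally graded as well.

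\emph{The chain.} Apply Proposition~\ref{prop:rf-chain} with $L=\Delta$, $Q=G$ and $\pi=\rho$. This needs $G$ to be infinite and residually finite. Infiniteness is clear: $G$ contains $\SL_d(\Z)$, and also $\EL_r(R)$, which is infinite. The proposition gives a strictly descending chain of finite-index normal non-sofic subgroups $\Delta_i=\rho^{-1}(Q_i)$ with $\bigcap_i\Delta_i=\ker\rho=K$.

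\emph{The kernel.} To see that $K$ has countably infinite rank, Lemma~\ref{lem:folding} requires $[G:\Gamma]$ to be infinite. This is the step I expect to need the most care. I would try the following argument. If $[G:\Gamma]$ were finite, the set $P_\Gamma$ would contain some $g$ with $g\Gamma g^{-1}<\Gamma$ a proper inclusion; properness holds because otherwise every element of $P_\Gamma$ normalises $\Gamma$, and then $\Gamma$ would be normal in $G=\langle P_\Gamma\rangle$. But $[G:g\Gamma g^{-1}]=[G:\Gamma]$ is finite, so $g\Gamma g^{-1}$ has index $1$ in $\Gamma$, a contradiction. Hence the index is infinite. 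Alternatively, one can check infinite index directly from $\Gamma=\EL_r(R_+)$ having trivial intersection with the infinite subgroup $\SL_d(\Z)$ of $G$.

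\emph{Conclusion.} The chain shows that $\Delta$ is $\omega$-non-sofic. Since $\Delta_1$ is a proper non-sofic subgroup, $\Delta$ is not minimal non-sofic. The same follows from Lemma~\ref{lem:graded}, or from Lemma~\ref{lem:minimal-restrictions}.
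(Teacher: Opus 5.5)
Your proposal is correct and follows the paper's proof: the folding kernel is free and $G$ is residually finite, so extension closure in Lemma~\ref{lem:graded} makes $\Delta$ locally graded, and Proposition~\ref{prop:rf-chain} applied to $\rho$ gives the chain with intersection $\ker\rho$. For the infinite index, the paper uses $\Gamma\leq\EL_r(R)$ with $G/\EL_r(R)\cong\SL_d(\Z)$ infinite, which is essentially your second argument; your first argument is also valid and more general, since it shows that every infranormal non-normal subgroup has infinite index ($g\Gamma g^{-1}<\Gamma$ cannot hold when both have the same finite index).
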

\begin{proof}
Non-soficity is \cite[Theorem A]{KunThom}. Since
$\Gamma\leq\EL_r(R)$ and $G/\EL_r(R)\cong\SL_d(\Z)$ is infinite,
we have $[G:\Gamma]=\infty$.

By Lemma~\ref{lem:folding}, the folding kernel $K$ is free of countably
infinite rank. Free groups are residually finite
\cite[Section II.3, p.~24]{CapraroLupini} and thus locally graded by
Lemma~\ref{lem:graded}. The exact sequence
\[
 1\longrightarrow K\longrightarrow\Delta
 \mathrel{\mathop{\longrightarrow}\limits^{\rho}}G
 \longrightarrow1
\]
and Lemma~\ref{lem:graded} show that $\Delta$ is locally graded.
Apply Proposition~\ref{prop:rf-chain} to $\rho$. This gives the chain
with intersection $K$, and its terms are locally graded by inheritance.
\end{proof}

\begin{corollary}\label{cor:no-minimal-subgroup}
There exist non-sofic groups containing no minimal non-sofic subgroup.
In particular, neither the wreath product $\Lambda$ in
Proposition~\ref{prop:kt-locally-graded} nor any of the groups $\Delta_i$
in Theorem~\ref{thm:double-chain} contains a minimal non-sofic subgroup.
\end{corollary}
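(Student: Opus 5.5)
The corollary follows by combining Proposition~\ref{prop:kt-locally-graded}, Theorem~\ref{thm:double-chain} and Lemma~\ref{lem:graded}. The first assertion is an existence statement. It is witnessed by $\Lambda$, or by any $\Delta_i$, once the second assertion is proved, since all these groups are non-sofic. So everything reduces to showing that each of these groups is locally graded and then invoking the last sentence of Lemma~\ref{lem:graded}.

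For $\Lambda=B\rtimes G$ with the pair \eqref{eq:kt-pair}, Proposition~\ref{prop:kt-locally-graded} already states that $\Lambda$ is locally graded and non-sofic. Let $M\leq\Lambda$ be minimal non-sofic. Local gradedness passes to subgroups by Lemma~\ref{lem:graded}, so $M$ would be locally graded. This contradicts Lemma~\ref{lem:minimal-restrictions}, which says a minimal non-sofic group is not locally graded. The underlying reason is that $M$ is finitely generated and non-trivial, so local gradedness would give it a proper subgroup of finite index, which Lemma~\ref{lem:finite-index} would force to be non-sofic.

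For the doubles, Theorem~\ref{thm:double-chain} gives that $\Delta=\Delta_0$ is locally graded. It also gives that every $\Delta_i$ in the chain is a locally graded non-sofic subgroup, by inheritance from $\Delta$. The same subgroup argument then shows that no $\Delta_i$ contains a minimal non-sofic subgroup.

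There is essentially no obstacle here. The only point to check is that the chain terms are genuinely non-sofic, so that they serve as examples for the first assertion. Theorem~\ref{thm:double-chain} provides this through Proposition~\ref{prop:rf-chain} and Lemma~\ref{lem:finite-index}.
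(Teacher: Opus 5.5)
Your proposal is correct and follows the paper's proof: $\Lambda$ and every $\Delta_i$ are locally graded by Proposition~\ref{prop:kt-locally-graded} and Theorem~\ref{thm:double-chain}, and the last assertion of Lemma~\ref{lem:graded} (subgroup inheritance plus Lemma~\ref{lem:minimal-restrictions}) then rules out minimal non-sofic subgroups. You also correctly note that non-soficity of these groups gives the existence assertion.
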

\begin{proof}
All these groups are locally graded. Apply Lemma~\ref{lem:graded},
whose last assertion is based on \cite[Remark 1.11]{Ersoy}.
\end{proof}

The intersection in Theorem~\ref{thm:double-chain} is free and hence
sofic \cite[Section II.3]{CapraroLupini}. Thus the doubles also give strictly descending chains of
non-sofic groups with sofic intersection.

\subsection{Higher Brin--Thompson groups}
\label{sec:brin}

Write $\mathcal C=\{0,1\}^{\N}$ for the Cantor space and $\BTV{n}$ for
the $n$-dimensional Brin--Thompson group acting on $\mathcal C^n$.
Here $n$ is a dimension: this notation is different from the
$n$-ary Higman--Thompson notation $V_n$. An element of $\BTV{n}$ is
given by finitely many coordinatewise prefix replacements between
dyadic rectangles forming partitions of $\mathcal C^n$. These groups
were introduced by Brin \cite{Brin2004}; for the prefix-replacement model
used here, see \cite[Section 5.1]{SauerSchesler}.

The classical Thompson groups $F,T,V$ were possible non-sofic
examples in \cite[Section 1]{Ersoy}. They are now known to be sofic
\cite[Theorem 1.1 and Corollary 1.2]{ChaudkhariJuschenkoMuliarchyk}.
In particular $1V=V$ is sofic.

For sufficiently large $n$, however, $nV$ has property~$(T)$
\cite[Theorem 1.1]{SauerSchesler}. Fournier-Facio proves in the appendix
to that paper that these groups are non-hyperlinear, hence non-sofic
\cite[Theorem A.1]{SauerSchesler}; see
\cite[Proposition II.2.5]{CapraroLupini} for the implication from
soficity to hyperlinearity. We use the standard localization embedding
from \cite[Notation A.6]{SauerSchesler}.

\begin{proposition}\label{prop:brin-embedding}
For every integer $n\geq1$, the group $\BTV{n}$ admits a proper
self-embedding. Consequently, for all sufficiently large $n$,
$\BTV{n}$ is $\omega$-non-sofic and is not minimal non-sofic.
\end{proposition}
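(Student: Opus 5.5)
I would construct the self-embedding by localization to a dyadic rectangle, in the spirit of \cite[Notation A.6]{SauerSchesler}, and then quote Lemma~\ref{lem:self-embedding}. Fix the rectangle
\[
 Q=0\mathcal C\times\mathcal C^{n-1},
\]
the set of points whose first coordinate begins with $0$. The coordinatewise prefix map $h:\mathcal C^n\to Q$, given by $h(x_1,x_2,\dots,x_n)=(0x_1,x_2,\dots,x_n)$, is a homeomorphism. Define
\[
 \psi:\BTV{n}\to\BTV{n},\qquad \psi(g)=hgh^{-1}\ \text{on }Q,\quad \psi(g)=\mathrm{id}\ \text{on }\mathcal C^n\setminus Q.
\]

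First I check that $\psi(g)$ lies in $\BTV{n}$. Suppose $g$ is given by prefix replacements between dyadic rectangles $A_k\to B_k$ forming partitions of $\mathcal C^n$. The images $h(A_k)\to h(B_k)$ are again dyadic rectangles, obtained by prepending $0$ to the first-coordinate prefix. They partition $Q$, and the replacement maps are still coordinatewise prefix replacements. Adding the single rectangle $1\mathcal C\times\mathcal C^{n-1}$ mapped identically completes both partitions of $\mathcal C^n$. So $\psi(g)$ is given by finitely many coordinatewise prefix replacements.

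Next, $\psi$ is a homomorphism, because conjugation by $h$ is multiplicative on the support $Q$ and identity maps compose trivially on the complement. It is injective, because $\psi(g)=1$ forces $hgh^{-1}=\mathrm{id}_Q$ and hence $g=1$.

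For properness, every element of the image fixes $1\mathcal C\times\mathcal C^{n-1}$ pointwise. Let $\tau\in\BTV{n}$ swap the first-coordinate prefixes $0$ and $1$; it is given by the two replacements $0\mathcal C\times\mathcal C^{n-1}\leftrightarrow1\mathcal C\times\mathcal C^{n-1}$. Then $\tau$ moves every point of the complement, so $\tau\notin\psi(\BTV{n})$. This works for every $n\geq1$, including $n=1$.

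For sufficiently large $n$, $\BTV{n}$ is non-sofic by \cite[Theorem A.1]{SauerSchesler}. Lemma~\ref{lem:self-embedding} then gives $\omega$-non-soficity, and Lemma~\ref{lem:minimal-restrictions} shows that $\BTV{n}$ is not minimal non-sofic, since a minimal non-sofic group is co-Hopfian.

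The only step that needs care is the first one: that conjugating by the prefix map $h$ keeps us inside the prefix-replacement model. This reduces to the observation that prepending a fixed letter in one coordinate sends dyadic rectangles to dyadic rectangles and commutes with coordinatewise prefix replacement.
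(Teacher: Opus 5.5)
Your proposal is correct and follows essentially the same route as the paper: you localize to $0\mathcal C\times\mathcal C^{n-1}$ via the first-coordinate prefix map, extend by the identity on $1\mathcal C\times\mathcal C^{n-1}$, use the swap of the two halves to show the image is proper, and then invoke Lemma~\ref{lem:self-embedding}. The only differences are cosmetic: you deduce non-minimality from co-Hopficity via Lemma~\ref{lem:minimal-restrictions}, and you cite \cite[Theorem A.1]{SauerSchesler} directly for non-soficity, whereas that theorem gives non-hyperlinearity, from which non-soficity follows.
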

\begin{proof}
Let
\[
 U_0=0\mathcal C\times\mathcal C^{n-1},\qquad
 U_1=1\mathcal C\times\mathcal C^{n-1},
\]
and let $c:\mathcal C^n\to U_0$ be the homeomorphism which prefixes
$0$ to the first coordinate. For $g\in\BTV{n}$, define
\[
 \iota(g)(x)=
 \begin{cases}
 cgc^{-1}(x),&x\in U_0,\\
 x,&x\in U_1.
 \end{cases}
\]
A finite prefix-replacement description of $g$ becomes one for
$\iota(g)$ by adding $0$ to the first-coordinate prefixes and adjoining
the identity piece on $U_1$. Thus $\iota(g)\in\BTV{n}$.
Restriction to $U_0$ shows both that $\iota$ is a homomorphism and that
it is injective. Every element of its image fixes $U_1$ pointwise.
The prefix replacement interchanging $U_0$ and $U_1$ belongs to
$\BTV{n}$ and does not have this property. Hence the image is proper.
The last assertion follows from the cited non-soficity theorem and
Lemma~\ref{lem:self-embedding}.
\end{proof}

The groups $\BTV{n}$ are simple by Brin \cite{Brin2010} and finitely
presented by Hennig--Matucci \cite[Theorem 25]{HennigMatucci}. Thus, among the known
simple non-sofic groups, both minimality and the existence of infinite
strictly descending chains of isomorphic non-sofic subgroups occur.
Simplicity by itself does not decide subgroup minimality.

\subsection{The Sandhu--Thom hyperbolic groups}
\label{sec:hyperbolic-examples}

Sandhu and Thom construct a torsion-free non-sofic hyperbolic group
$G_0$ as the fundamental group of a finite locally $\mathrm{CAT}(-1)$
triangle complex \cite[Theorem A]{SandhuThom}.
Their Corollary B then proves the existence of a non-trivial
torsion-free hyperbolic group $H$ with no non-trivial sofic quotient.
The latter is the group used in Section~\ref{sec:existence}.
The proof of that corollary is an existence argument; it does not
assert that $G_0$ itself has this stronger quotient property.

Since amenable groups are sofic, $H$ has no non-trivial amenable
quotient. In particular it is perfect and has no non-trivial finite
quotient. It therefore has no proper subgroup of finite index: the
coset action of such a subgroup would give a non-trivial finite
quotient. Thus the finite-index criterion does not apply to $H$.

The Tarski quotients in Theorem~\ref{thm:main-existence} give no
corresponding subgroup information. The arguments here do not decide
$\omega$-non-soficity for either $G_0$ or $H$.

\subsection{OpenAI's torsion-free hyperbolic group}
\label{sec:douglas-mghirbi}

Let $G$ be the torsion-free hyperbolic group studied by Douglas and
Mghirbi \cite{DouglasMghirbi}, originally constructed by OpenAI.
They prove that $G$ is non-sofic \cite[Theorem 1.1]{DouglasMghirbi}.
Their free-product decomposition
\cite[Proposition 6.4]{DouglasMghirbi} gives
\begin{equation}\label{eq:dm-splitting}
 G\cong G^\sharp*F_k,
 \qquad k=B+2r-3\geq1,
\end{equation}
where $G^\sharp$ is finitely presented, torsion-free and hyperbolic,
and $F_k$ is free of rank $k$. Here $B$ is the number of blocks and
$r$ is the integer in their construction. Their proof gives $B\geq2$,
so $k\geq1$ since $r\geq1$. We identify $G$ with the free product in
\eqref{eq:dm-splitting}.

\begin{proposition}\label{prop:dm-omega}
The free factor $G^\sharp$ is non-sofic. The group $G$ admits a proper
self-embedding and is therefore $\omega$-non-sofic and not minimal
non-sofic.
\end{proposition}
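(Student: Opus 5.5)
First, $G^\sharp$ is non-sofic. Suppose it were sofic. The free group $F_k$ is residually finite, hence sofic. Free products of sofic groups are sofic by \cite[Theorem 1(2)]{ElekSzabo2006}, so $G\cong G^\sharp*F_k$ would be sofic. This contradicts \cite[Theorem 1.1]{DouglasMghirbi}.

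For the self-embedding, I would use the free factor $F_k$, with $k\geq1$ as in \eqref{eq:dm-splitting}. Let $x$ be a free generator of $F_k$, so $\langle x\rangle\cong\Z$ is a free factor of $F_k$. Since $k\geq1$, the group $F_k$ embeds properly in itself. Concretely, send the free basis $x_1,\ldots,x_k$ to $x_1^2,x_2,\ldots,x_k$. This is injective, because $\{x_1^2,x_2,\ldots,x_k\}$ is a free basis of the subgroup it generates. It has proper image, because $x_1$ is not in that subgroup: its image under the abelianization $F_k\to\Z^k$ does not lie in $2\Z\oplus\Z^{k-1}$.

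Define $\varphi:G\to G$ as the identity on $G^\sharp$ and as this map $\psi$ on $F_k$. The universal property of free products makes $\varphi$ a homomorphism. It is injective because a free product of injective homomorphisms $A*B\to A'*B'$, with $A\to A'$ and $B\to B'$ injective, is injective. This follows from the normal form theorem: reduced alternating words map to reduced alternating words, since non-trivial elements of a factor go to non-trivial elements of the same factor. Its image $G^\sharp*\psi(F_k)$ is proper. Indeed, composing with the retraction $G\to F_k$ that kills $G^\sharp$ sends the image into $\psi(F_k)$, while $x_1\in G$ maps to $x_1\notin\psi(F_k)$.

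Lemma~\ref{lem:self-embedding} then gives $\omega$-non-soficity. The image $\varphi(G)\cong G$ is a proper non-sofic subgroup, so $G$ is not minimal non-sofic. Alternatively, $G^\sharp<G$ is a proper non-sofic subgroup; this is proper because $k\geq1$. The only point needing care is injectivity of $\varphi$ via normal forms, and that is standard.
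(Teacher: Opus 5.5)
Your proof is correct and follows the paper's argument. The paper also fixes $G^\sharp$ and squares free generators (all of them rather than only $x_1$), gets injectivity from the free-product normal form, and detects $x_1\notin\varphi(G)$ via a character $\chi\colon G\to\Z/2\Z$, which is your retraction followed by abelianization mod $2$ in the first coordinate. Your assertion that $\{x_1^2,x_2,\ldots,x_k\}$ is free on the subgroup it generates is an instance of your own normal-form principle, applied to $F_k=\langle x_1\rangle*\langle x_2,\ldots,x_k\rangle$.
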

\begin{proof}
Free groups are sofic \cite[Section II.3]{CapraroLupini}. If
$G^\sharp$ were sofic, then \eqref{eq:dm-splitting} and the free-product
closure theorem \cite[Theorem 1(2)]{ElekSzabo2006} would make $G$
sofic, contrary to \cite[Theorem 1.1]{DouglasMghirbi}.

Choose a free basis $x_1,\ldots,x_k$ of $F_k$. The universal property
of the free product gives an endomorphism $\varphi:G\to G$ with
\[
 \varphi(a)=a\quad(a\in G^\sharp),
 \qquad
 \varphi(x_i)=x_i^2\quad(1\leq i\leq k).
\]
Write $F_k=\langle x_1\rangle*\cdots*\langle x_k\rangle$.
Replacing each non-zero exponent in a reduced word by twice that
exponent leaves a non-empty reduced word. Thus $\varphi|_{F_k}$ is
injective. The normal form in $G^\sharp*F_k$ then gives injectivity
of $\varphi$, since non-trivial syllables remain non-trivial;
see \cite[Section 2, Definition 2.1]{ElekSzabo2006}.

For properness, let $\chi:G\to\Z/2\Z$ be the homomorphism which
vanishes on $G^\sharp$, sends $x_1$ to $1$ and sends $x_i$ to $0$
for $i\geq2$. Then $\chi\circ\varphi=0$, whereas $\chi(x_1)=1$.
Consequently $x_1\notin\varphi(G)$.

It follows from Lemma~\ref{lem:self-embedding} that
\[
 G>\varphi(G)>\varphi^2(G)>\cdots
\]
is a strictly descending chain of groups isomorphic to $G$.
Explicitly,
$\varphi^j(G)=\langle G^\sharp,x_1^{2^j},\ldots,x_k^{2^j}\rangle$.
This proves $\omega$-non-soficity in the sense of
\cite[Definition 1.4]{Ersoy}.

\end{proof}

The retraction $G\twoheadrightarrow F_k$ gives a non-trivial sofic
quotient. Thus $G$ does not have the stronger quotient property of
$H$ in Theorem~\ref{thm:ST}. The proposition does not decide whether
$G^\sharp$ is minimal non-sofic or $\omega$-non-sofic.

\section{Further questions}
\label{sec:questions}

Theorem~\ref{thm:main-existence} supplies non-sofic Tarski monsters,
but it does not decide soficity for all Tarski monsters.

\begin{question}\label{q:sofic-monsters}
Does there exist a sofic Tarski monster of prime exponent? Does there
exist a sofic torsion-free Tarski monster?
\end{question}

\begin{question}\label{q:fp-minimal}
Does there exist a finitely presented minimal non-sofic group? Can
such a group be chosen torsion-free?
\end{question}

The quotient constructions used here do not establish finite
presentability. A quotient of a finitely presented group need not be
finitely presented.

\begin{question}[{\cite[Question 1.6]{Ersoy}}]\label{q:unbounded}
Does there exist a periodic minimal non-sofic group of unbounded
exponent?
\end{question}

Each $K_{p,m}$ has finite exponent. The group in
Corollary~\ref{cor:divisible} has unbounded exponent but contains
$T_p$ as a proper non-sofic subgroup.

\begin{question}\label{q:ff}
For the final quotients $Q$ in Fournier-Facio's construction, is
$\Gamma=\pi(P)$ non-sofic? Is every such $Q$ $\omega$-non-sofic?
\end{question}

An affirmative answer to the first question would answer the second
by Proposition~\ref{prop:ff-compression}. The non-soficity of the
universal group $P$ alone is not sufficient, since $\pi|_P$ need not
be injective.

\begin{question}\label{q:hyperbolic-omega}
Is every non-sofic hyperbolic group $\omega$-non-sofic?
\end{question}

Proposition~\ref{prop:dm-omega} gives an affirmative answer for the
group $G$ in Section~\ref{sec:douglas-mghirbi}, but not for its free
factor $G^\sharp$. The question also asks for more subgroup information
about the examples in \cite{SandhuThom}. Their non-sofic Tarski
quotients do not answer it.

\end{document}